\numberwithin{equation}{section}
\newtheorem{thm}{Theorem}[section]
\newtheorem{cor}[thm]{Corollary}
\newtheorem{lem}[thm]{Lemma}
\newtheorem{defn}[thm]{Definition}
\newtheorem{rmk}[thm]{Remark}
\begin{document}

\title[Weakly Horospherically Convex Hypersurfaces in Hyperbolic Space]{Weakly Horospherically Convex Hypersurfaces in Hyperbolic Space}
\author{Vincent Bonini, Jie Qing$^1$, Jingyong Zhu$^2$}
\thanks{$^1$The author would like to acknowledge the partial support by NSF DMS-1608782 for this research.}
\thanks{$^2$The author wants to thank the support by China Scholarship Council for visiting University of California, Santa Cruz. }
\address{Department of Mathematics,  Cal Poly State University, San Luis Obispo, CA 93407, United States}
\email{vbonini@calpoly.edu}
\address{Department of Mathematics,  University of California, Santa Cruz, CA 95064, United States}
\email{qing@ucsc.edu}
\address{School of Mathematical Sciences,  University of Science and Technology of China, Hefei, Anhui 230026, China}
\email{zjyjj0224@gmail.com}

\subjclass[2010]{}%
\keywords{weakly horospherically convex, hyperbolic space, support function, embeddedness, Bernstein theorem}
%\date{}%
%\dedicatory{}%
%\commby{}%

% ----------------------------------------------------------------

\begin{abstract}

In \cite{bonini2015hypersurfaces}, the authors develop a global correspondence between immersed weakly horospherically convex hypersurfaces $\phi:M^n \to \mathbb{H}^{n+1}$ and a class of conformal metrics on domains of the round sphere $\mathbb{S}^n$. Some of the key aspects of the correspondence and its consequences have dimensional restrictions $n\geq3$ due to the reliance on an analytic proposition from \cite{chang2004class} concerning the asymptotic behavior of conformal factors of conformal metrics on domains of $\mathbb{S}^n$. In this paper, we prove a new lemma about the asymptotic behavior of a functional combining the gradient of the conformal factor and itself, which  allows us to extend the global correspondence and embeddedness theorems of \cite{bonini2015hypersurfaces} to all dimensions $n\geq2$ in a unified way. In the case of a single point boundary $\partial_{\infty}\phi(M)=\{x\} \subset \mathbb{S}^n$, we improve these results in one direction. As an immediate consequence of this improvement and the work on elliptic problems in \cite{bonini2015hypersurfaces}, we have a new, stronger Bernstein type theorem. Moreover, we are able to extend the Liouville and Delaunay type theorems from \cite{bonini2015hypersurfaces} to the case of surfaces in $\mathbb{H}^{3}$.
\end{abstract}
\maketitle

% ----------------------------------------------------------------

\section{Introduction}
    
In \cite{espinar2009hypersurfaces} the authors observed an interesting fact that the principal curvatures of a hypersurface $\phi : M^n\to\mathbb{H}^{n+1}$ satisfying an exterior horosphere condition, which we call weak horospherical convexity, can be calculated in terms of the eigenvalues of the Schouten tensor of the horospherical metric $\hat{g}=e^{2\rho}g_{\mathbb{S}^n}$ via its horospherical support function $\rho$. This observation creates a local correspondence that opens a window for more interactions between the study of elliptic problems of Weingarten hypersurfaces in hyperbolic space and the study of elliptic problems of conformal metrics on domains of $\mathbb{S}^n$.

In \cite{bonini2015hypersurfaces} the authors developed the global theory for the correspondence above. They established results on when the hyperbolic Gauss map $G: M^n\to\mathbb{S}^n$ is injective and discussed when an immersed weakly horospherically convex hypersurface can be unfolded into an embedded one along the normal flow. As applications, they established a new Bernstein type theorem for a complete, immersed, weakly horospherically convex hypersurfaces in $\mathbb{H}^{n+1}$ of constant mean curvature and new Liouville and Delaunay type theorems. 

In one direction injectivity of the hyperbolic Gauss map $G:M^n \to \mathbb{S}^n$ is found to play an essential role in both the correspondence and the ability to unfold such hypersurfaces into embedded ones. In the other direction two key issues are the properness of a hypersurface associated to a conformal metric and the consistency of its boundary at infinity with that of its hyperbolic Gauss map image. These issues are resolved in \cite{bonini2015hypersurfaces} with the aid of Proposition 8.1 in \cite{chang2004class}, which is used to control the asymptotic behavior of conformal factors of conformal metrics on domains of $\mathbb{S}^n$ and guarantee both the properness of associated hypersurfaces and the consistency of the aforementioned boundaries. For technical reasons Proposition 8.1 in \cite{chang2004class} is not valid when $n=2$, so its use places the dimensional restrictions $n\geq3$ on the global correspondence and its applications.
  
In this paper, we first establish a new lemma on the asymptotic behavior of a functional of the conformal factor for realizable metrics on domains of $\mathbb{S}^n$ (see Definition \eqref{Def:AdmissibleRealizable}).

\begin{lem}\label{Lem:Gradient}
For $n \geq 2$, suppose that $\hat{g}=e^{2\rho}g_{\mathbb{S}^n}$ is a complete conformal metric on a domain $\Omega\subset\mathbb{S}^n$ with bounded $2$-tensor 
$P=-\nabla_{g_{\mathbb{S}^n}}^2\rho+d\rho\otimes d\rho-\frac12(|\nabla^{g_{\mathbb{S}^n}}\rho|_{g_{\mathbb{S}^n}}^2-1)g_{\mathbb{S}^n}$. Then 
\begin{equation}
e^{2\rho(x)}+|\nabla^{g_{\mathbb{S}^n}}\rho|_{g_{\mathbb{S}^n}}^2(x)\to+\infty \  \  \text{as} \  \ x\to x_0\in\partial\Omega.
\end{equation}
\end{lem}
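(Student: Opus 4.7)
The plan is to argue by contradiction and exploit the completeness of $\hat g$ via a short spherical arc estimate. Suppose there exist $x_0\in\partial\Omega$ and a sequence $x_k\to x_0$ in $\Omega$ along which
\[
e^{2\rho(x_k)} + \bigl|\nabla^{g_{\mathbb{S}^n}}\rho\bigr|^2_{g_{\mathbb{S}^n}}(x_k) \;\leq\; C_0
\]
for some constant $C_0$. The target is to show that on any spherical geodesic emanating from $x_k$ and lying in $\Omega$, the support function $\rho$ stays below a constant $M$ on a time window of length bounded below by some $T_0>0$ that is independent of $k$. Since $d_{g_{\mathbb{S}^n}}(x_k,x_0)\to 0$, this will make the $\hat g$-length of an arc from $x_k$ to a point arbitrarily close to $\partial\Omega$ uniformly bounded in $k$, contradicting the completeness of $(\Omega,\hat g)$.

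First I would establish a pointwise Hessian estimate. Rewriting the definition of $P$ as
\[
\nabla^2_{g_{\mathbb{S}^n}}\rho \;=\; -P + d\rho\otimes d\rho - \tfrac12\bigl(|\nabla^{g_{\mathbb{S}^n}}\rho|^2_{g_{\mathbb{S}^n}}-1\bigr)g_{\mathbb{S}^n}
\]
and invoking the hypothesis $|P|_{g_{\mathbb{S}^n}}\leq K$, one obtains
\[
\bigl|\nabla^2_{g_{\mathbb{S}^n}}\rho\bigr|_{g_{\mathbb{S}^n}} \;\leq\; C_1\bigl(|\nabla^{g_{\mathbb{S}^n}}\rho|^2_{g_{\mathbb{S}^n}}+1\bigr)
\]
with $C_1=C_1(n,K)$.

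Next, along a unit-speed spherical geodesic $\gamma\subset\Omega$ with $\gamma(0)=x_k$, I would set $f(t)=\rho(\gamma(t))$ and $H(t)=|\nabla^{g_{\mathbb{S}^n}}\rho|^2_{g_{\mathbb{S}^n}}(\gamma(t))+1$. Then $|f'|\leq\sqrt H$, and differentiating $H$ along $\gamma$ together with the Hessian bound yields the autonomous inequality
\[
|H'(t)| \;=\; 2\,\bigl|\nabla^2_{g_{\mathbb{S}^n}}\rho\bigl(\nabla^{g_{\mathbb{S}^n}}\rho,\dot\gamma\bigr)\bigr| \;\leq\; C_2\,H(t)^{3/2}.
\]
Since $H(0)\leq C_0+1$, integrating $\bigl(H^{-1/2}\bigr)'\geq -C_2/2$ shows that $H(t)\leq 4(C_0+1)$ on $[0,T_0]$ with $T_0:=1/\bigl(C_2\sqrt{C_0+1}\bigr)$; consequently $\rho(\gamma(t))\leq f(0)+2T_0\sqrt{C_0+1}=:M$ on this same interval, with $M$ independent of $k$ and of the initial direction of $\gamma$.

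To finish, take $k$ so large that $d_{g_{\mathbb{S}^n}}(x_k,x_0)<T_0$, let $\gamma$ be the minimizing spherical geodesic from $x_k$ to $x_0$, and let $t^*\in(0,T_0]$ be its first exit time from $\Omega$. On $[0,t^*)$ the curve lies in $\Omega$ and $\rho\leq M$, so $\gamma|_{[0,t^*-\varepsilon]}$ has $\hat g$-length at most $e^M T_0$ for every $\varepsilon>0$; letting $\varepsilon\to 0$ produces interior points converging to a boundary point at uniformly bounded $\hat g$-distance from $x_k$, contradicting completeness. The main obstacle is identifying the correct autonomous inequality: the Schouten bound controls $\nabla^2\rho$ only modulo $|\nabla\rho|^2+1$, so attempting to bound $\rho$ or $|\nabla\rho|^2$ alone leads to non-closed systems, whereas packaging them together as $H$ produces the clean estimate $|H'|\leq C_2 H^{3/2}$ whose solutions enjoy a fixed-length window of stability. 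This dimension-free feature is precisely what permits the extension of \cite[Proposition~8.1]{chang2004class} to the critical case $n=2$.
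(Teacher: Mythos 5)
Your overall strategy (contradiction at a boundary point where $\beta=e^{2\rho}+|\nabla^{g_{\mathbb{S}^n}}\rho|^2_{g_{\mathbb{S}^n}}$ stays bounded, a differential inequality along spherical geodesics, a uniform short window on which $\rho$ stays bounded, and hence a curve of finite $\hat g$-length reaching $\partial\Omega$, contradicting completeness) is exactly the paper's strategy. But your first step contains a genuine gap: you bound the Hessian by $|\nabla^2_{g_{\mathbb{S}^n}}\rho|_{g_{\mathbb{S}^n}}\leq C_1(|\nabla^{g_{\mathbb{S}^n}}\rho|^2_{g_{\mathbb{S}^n}}+1)$, which presumes $|P|_{g_{\mathbb{S}^n}}\leq K$. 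In this paper ``bounded $P$'' means bounded with respect to the horospherical metric $\hat g=e^{2\rho}g_{\mathbb{S}^n}$ (this is what the hypersurface correspondence produces: uniform weak horospherical convexity bounds the eigenvalues of $P$ relative to $\hat g$ via $\lambda_i=\tfrac12-\tfrac{1}{1+\kappa_i}$, and the paper's proof explicitly invokes $|P|_{\hat g}\leq C_0$). Since $|P|_{g_{\mathbb{S}^n}}=e^{2\rho}|P|_{\hat g}$, the correct pointwise estimate is
\begin{equation*}
|\nabla^2_{g_{\mathbb{S}^n}}\rho|_{g_{\mathbb{S}^n}}\;\leq\;K\bigl(C_0e^{2\rho}+|\nabla^{g_{\mathbb{S}^n}}\rho|^2_{g_{\mathbb{S}^n}}+1\bigr),
\end{equation*}
with an $e^{2\rho}$ term you cannot drop. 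Consequently your inequality $|H'|\leq C_2H^{3/2}$ for $H=|\nabla^{g_{\mathbb{S}^n}}\rho|^2_{g_{\mathbb{S}^n}}+1$ is not closed: the conformal factor enters the right-hand side, and controlling it is precisely the point of the lemma. Under your reading of the hypothesis the lemma would also be too weak for its intended application to admissible hypersurfaces.

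The gap is repairable in two ways. The paper closes the loop with a bootstrap: it compares $|\nabla^{g_{\mathbb{S}^n}}\rho|$ with the solution of $Y'=Y^2+A$, where $A$ is chosen self-consistently so that $C_0Ce^{2K\delta\bar Y}+1\leq A$, and rules out an interior maximum of the gradient in a slightly smaller ball by using the mean value theorem to keep $C_0e^{2\rho}$ below $A$ there; this yields the uniform gradient bound on $B_\delta(x_i)$ and then the bound on $\rho$ near $x_0$. Alternatively, your scheme works almost verbatim if you genuinely package the conformal factor into the functional: take $H=e^{2\rho}+|\nabla^{g_{\mathbb{S}^n}}\rho|^2_{g_{\mathbb{S}^n}}+1=\beta+1$ along the geodesic; then $H'=2e^{2\rho}\langle\nabla^{g_{\mathbb{S}^n}}\rho,\dot\gamma\rangle+2\nabla^2_{g_{\mathbb{S}^n}}\rho(\nabla^{g_{\mathbb{S}^n}}\rho,\dot\gamma)$ and the corrected Hessian bound give $|H'|\leq CH^{3/2}$, after which your fixed-length window, the bound $e^{2\rho}\leq H\leq 4H(0)$, and the finite-length contradiction all go through. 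As written, however, the argument does not prove the lemma under the paper's hypothesis.
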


Using Lemma \ref{Lem:Gradient} in place of Proposition 8.1 \cite{chang2004class}, we address the questions in \cite{bonini2015hypersurfaces} for all dimensions $n\geq2$ in a unified way. In particular, we can extend two of the main results in \cite{bonini2015hypersurfaces} to include the lower dimensional case $n=2$. The first is the Global Correspondence Theorem between admissible hypersurfaces and realizable metrics (see Definition \eqref{Def:AdmissibleRealizable}).
 
\begin{thm}\label{Thm:GCT}[Global Correspondence Theorem]
For $n \geq 2$, suppose that $\phi : M^n\to\mathbb{H}^{n+1}$ is an immersed, complete, uniformly weakly horospherically convex hypersurface with injective hyperbolic Gauss map $G : M^n\to\mathbb{S}^{n} $. Then it induces a complete conformal metric $\hat{g}=e^{2\rho}g_{\mathbb{S}^n}$ on the image of the hyperbolic Gauss map $G(M)\subset\mathbb{S}^{n}$ with bounded $2$-tensor 
\begin{equation}\label{Eq:2TensorP}
P=-\nabla_{g_{\mathbb{S}^n}}^2\rho+d\rho\otimes d\rho-\frac12(|\nabla^{g_{\mathbb{S}^n}}\rho|_{g_{\mathbb{S}^n}}^2-1)g_{\mathbb{S}^n},
\end{equation} 
where $\rho$ is the horospherical support function of $\phi$ and
\begin{equation}\label{Eq:Boundary}
\partial_{\infty}\phi(M)=\partial G(M).
\end{equation}

On the other hand, suppose that $\hat{g}=e^{2\rho}g_{\mathbb{S}^{n}}$ is a complete conformal metric on a domain $\Omega\subset\mathbb{S}^{n}$ with bounded $2$-tensor $P$ defined as in \eqref{Eq:2TensorP}. Then it induces a properly immersed, complete, uniformly weakly horospherically convex hypersurface
\begin{equation}
\phi^t=\frac{e^{\rho+t}}{2}(1+e^{-2(\rho+t)}(1+|\nabla^{g_{\mathbb{S}^n}}\rho|_{g_{\mathbb{S}^n}}^2))(1,x)+e^{-(\rho+t)}(0,-x+\nabla^{g_{\mathbb{S}^n}}\rho) : \Omega\to\mathbb{H}^{n+1}
\end{equation}
for $t$ sufficiently large.
\end{thm}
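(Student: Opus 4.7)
The plan is to follow the proof of the analogous theorem in \cite{bonini2015hypersurfaces}, where the restriction $n\geq 3$ entered only through the use of Proposition 8.1 of \cite{chang2004class}. Since the new Lemma \ref{Lem:Gradient} holds for all $n\geq 2$ and supplies precisely the asymptotic control needed, substituting it in the two places where the earlier proof invoked the sharper statement will yield a dimension-independent proof.

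For the forward direction, the pointwise correspondence of \cite{espinar2009hypersurfaces} already expresses the eigenvalues of $P$ as an explicit rational function of the principal curvatures of $\phi$. Injectivity of $G$ globalizes $\rho$ to $G(M)$, so $\hat g = e^{2\rho}g_{\mathbb{S}^n}$ is a well-defined conformal metric; uniform weak horospherical convexity keeps the principal curvatures in a compact subinterval of $(1,\infty)$, which bounds $P$, and completeness of $\hat g$ follows from that of $\phi$ via the resulting quasi-isometry of $\hat g$ with the pushforward under $G$ of the induced hyperbolic metric. For the boundary identification $\partial_\infty\phi(M)=\partial G(M)$, if $p_k \in M$ diverges then, after extraction, $G(p_k)\to x_0\in\partial G(M)$, since completeness of both metrics rules out the alternative that $G(p_k)$ stays in a compact subset of $G(M)$; Lemma \ref{Lem:Gradient} applied at $x_0$ then forces the time-like coordinate of $\phi(p_k)$ in the hyperboloid model to diverge, and the expansion of that coordinate shows $\phi(p_k)\to x_0 \in \mathbb{S}^n=\partial_\infty\mathbb{H}^{n+1}$.

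For the reverse direction, replacing $\rho$ by $\rho+t$ shifts the eigenvalues of $P$ so that the associated principal curvatures lie in a compact subinterval of $(1,\infty)$ for $t$ sufficiently large, making $\phi^t$ a uniformly weakly horospherically convex immersion; completeness of $\phi^t$ is again a consequence of the quasi-isometry with $\hat g$. The main obstacle, and the step where the previous dimensional restriction arose, is properness: given $x_k\to x_0 \in\partial\Omega$ we must show $\phi^t(x_k)$ escapes to $\partial_\infty\mathbb{H}^{n+1}$. Its time-like coordinate in the hyperboloid model is
\begin{equation*}
\frac{e^{\rho+t}}{2} + \frac{e^{-(\rho+t)}}{2}\bigl(1+|\nabla^{g_{\mathbb{S}^n}}\rho|_{g_{\mathbb{S}^n}}^2\bigr),
\end{equation*}
and Lemma \ref{Lem:Gradient} guarantees this diverges at $\partial\Omega$, because either $e^{2\rho}$ or $|\nabla\rho|^2$ must blow up there. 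Factoring $e^{\rho+t}/2$ out of the defining formula for $\phi^t$ then shows $\phi^t(x_k)\to x_0$ in $\partial_\infty\mathbb{H}^{n+1}$, yielding both properness and the consistency of the ideal boundaries. The key observation is that the combination $e^{2\rho}+|\nabla^{g_{\mathbb{S}^n}}\rho|_{g_{\mathbb{S}^n}}^2$ in Lemma \ref{Lem:Gradient} is precisely what appears in the height of $\phi^t$, so the weaker conclusion of the lemma (relative to Proposition 8.1 of \cite{chang2004class}) is exactly tailored to the geometric situation at hand.
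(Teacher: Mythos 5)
Your proposal is correct and follows essentially the same route as the paper: the local correspondence of \cite{espinar2009hypersurfaces} supplies the conformal metric and the eigenvalue--curvature relation, and all the asymptotic issues (properness, divergence of the height, consistency of the two boundaries) are reduced to the new Lemma \ref{Lem:Gradient}, your height-coordinate computation being precisely the content of Lemma \ref{Lem:CompleteProperBd} that the paper imports from \cite{bonini2015hypersurfaces}. Two phrasings should be corrected, though neither affects the argument: uniform weak horospherical convexity only gives $\kappa_i \geq \kappa_0 > -1$ (not principal curvatures in a compact subinterval of $(1,\infty)$), which still bounds $P$ since $\lambda_i = \tfrac12 - \tfrac{1}{1+\kappa_i} < \tfrac12$ automatically; and in the forward direction one has only the one-sided comparison $g_h \geq (1+\kappa_0)^2\,\phi^{*}g_{\mathbb{H}^{n+1}}$ rather than a genuine quasi-isometry, which is exactly the direction needed to pass completeness from $\phi$ to $\hat g$.
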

  
The other main result is the Embeddedness Theorem, which shows that a uniformly weakly horospherically convex hypersurface will eventually become embedded under the geodesic normal flow when the hypersurface has boundary at infinity with relatively simple structure.

\begin{thm}\label{Thm:EmbFlow}[Embeddedness Theorem]
For $n \geq 2$, suppose that $\phi : M^n\to\mathbb{H}^{n+1}$ is an immersed, complete, uniformly weakly horospherically convex hypersurface with injective hyperbolic Gauss map $G : M^n\to\mathbb{S}^{n}$. In addition, assume that the boundary at infinity $\partial_{\infty}\phi(M)$ is a disjoint, finite union of smooth compact embedded submanifolds with no boundary in $\mathbb{S}^{n}$. Then $\phi$ can be unfolded into an embedded hypersurface along its geodesic normal flow eventually.

Equivalently, suppose that $\hat{g}=e^{2\rho}g_{\mathbb{S}^{n}}$ is a complete conformal metric on a domain $\Omega\subset\mathbb{S}^{n}$ with bounded $2$-tensor $P$ defined as in \eqref{Eq:2TensorP}. In addition, assume that the boundary $\partial\Omega$ is a disjoint, finite union of smooth compact embedded submanifolds with no boundary in $\mathbb{S}^{n}$. Then the hypersurfaces
\begin{equation}
\phi^t=\frac{e^{\rho+t}}{2}(1+e^{-2(\rho+t)}(1+|\nabla^{g_{\mathbb{S}^n}}\rho|_{g_{\mathbb{S}^n}}^2))(1,x)+e^{-(\rho+t)}(0,-x+\nabla^{g_{\mathbb{S}^n}}\rho) : \Omega\to\mathbb{H}^{n+1}
\end{equation}
are embedded for $t$ sufficiently large. 
\end{thm}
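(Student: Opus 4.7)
The plan is to apply the Global Correspondence Theorem \ref{Thm:GCT} to translate the statement into the conformal-metric formulation and then to analyze the explicit formula for $\phi^t$ directly, adapting the embeddedness argument of \cite{bonini2015hypersurfaces} with Lemma \ref{Lem:Gradient} substituted for Proposition 8.1 of \cite{chang2004class} wherever the latter was invoked. The crucial first observation is that the geodesic normal flow corresponds to the shift $\rho \mapsto \rho + t$ of the horospherical support function, so the Gauss map image $\Omega$ and its boundary decomposition $\partial\Omega = \sqcup_{i=1}^N \Sigma_i$ are preserved along the flow. Since Theorem \ref{Thm:GCT} already gives that $\phi^t$ is a proper immersion for all sufficiently large $t$, upgrading to an embedding amounts to proving global injectivity for such $t$.

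Next I would split the analysis into two regions. On any compactly contained subset $K \subset \Omega$, both $\rho$ and $|\nabla^{g_{\mathbb{S}^n}}\rho|_{g_{\mathbb{S}^n}}$ are uniformly bounded, so the formula for $\phi^t$ shows that the leading term $\tfrac12 e^{\rho+t}(1,x)$ dominates and $\phi^t|_K$ is, up to correction of size $O(e^{-(\rho+t)})$, a radial lift of the inclusion $K \hookrightarrow \mathbb{S}^n \cong \partial_\infty \mathbb{H}^{n+1}$. Since distinct points of $K$ approach distinct asymptotic endpoints along distinct geodesics, a compactness argument on $K\times K\setminus\Delta$ combined with an inverse function type estimate yields injectivity of $\phi^t|_K$ for $t$ sufficiently large.

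The delicate step is controlling $\phi^t$ in a collar neighborhood of each boundary component $\Sigma_i$, and this is where Lemma \ref{Lem:Gradient} is essential. The lemma guarantees
\[
e^{2\rho(x)} + |\nabla^{g_{\mathbb{S}^n}}\rho|_{g_{\mathbb{S}^n}}^2(x) \longrightarrow +\infty \qquad \text{as } x \to \Sigma_i,
\]
uniformly for all $n \geq 2$, which is exactly the substitute for the pointwise blow-up $e^\rho \to \infty$ that was only available in dimensions $n \geq 3$ via \cite{chang2004class}. Combining this blow-up with the smoothness and embeddedness of $\Sigma_i \subset \mathbb{S}^n$, one shows that for $t$ large $\phi^t$ sends a one-sided collar of $\Sigma_i$ in $\Omega$ into an arbitrarily thin geodesic tubular neighborhood of $\Sigma_i$ inside $\mathbb{H}^{n+1}$, on which the foot-of-perpendicular projection onto $\Sigma_i$ is a diffeomorphism. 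Finiteness of the decomposition $\partial\Omega = \sqcup_i \Sigma_i$ and compactness of each $\Sigma_i$ then force the tubes around distinct boundary components to be disjoint for $t$ sufficiently large, which together with the interior estimate completes the proof of global injectivity, hence embeddedness. The main obstacle is precisely this boundary analysis: in the formula for $\phi^t$ one has a leading term of order $e^{\rho+t}$ and a secondary term of order $e^{-(\rho+t)}|\nabla\rho|$, and without the combined lower bound supplied by Lemma \ref{Lem:Gradient} one cannot confine $\phi^t$ to a thin tube around $\Sigma_i$; once the lemma is in hand, the quantitative estimates of \cite{bonini2015hypersurfaces} carry over verbatim and now cover the dimension $n = 2$.
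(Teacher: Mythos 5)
Your overall framing---reduce to the conformal picture via Theorem \ref{Thm:GCT}, note that properness is already supplied there, and observe that the only dimension-dependent input in \cite{bonini2015hypersurfaces} was the asymptotic control now furnished by Lemma \ref{Lem:Gradient}---is exactly the paper's strategy: the paper proves the theorem by remarking that once the correspondence (in particular $\partial_\infty\phi^t(\Omega)=\partial\Omega$, via Lemma \ref{Lem:CompleteProperBd} and Lemma \ref{Lem:Gradient}) holds for $n\geq 2$, the embeddedness statement follows verbatim from Theorem 3.6 of \cite{bonini2015hypersurfaces}, whose unfolding argument is a convexity argument and is dimension-free. Your interior estimate on a compactly contained $K\subset\Omega$ is fine and standard.

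The genuine gap is in your boundary-collar step, which is where all the content of the theorem lives and which you assert rather than prove. Confining $\phi^t$ of a collar of $\Sigma_i$ into an arbitrarily thin neighborhood of $\Sigma_i$ (note $\Sigma_i\subset\mathbb{S}^n=\partial_\infty\mathbb{H}^{n+1}$, so such a ``tube'' only makes sense in the closed ball $\overline{\mathbb{H}^{n+1}}$) gives no injectivity whatsoever: two distinct collar points can perfectly well land at the same point of a thin tube. The ``foot-of-perpendicular projection onto $\Sigma_i$ is a diffeomorphism'' claim is dimensionally impossible (the tube has dimension $n+1$, $\Sigma_i$ has dimension at most $n-1$), and even corrected to a fibration it does not separate points of $\phi^t$ along the fibers. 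The failure is most visible when $\Sigma_i$ has codimension at least $2$ --- e.g.\ a single-point boundary component, which is precisely the case exploited later in Theorems \ref{Thm:CorrSinglePtBd} and \ref{Thm:Bernstein} --- where the entire collar maps into one small neighborhood and your argument yields nothing; similarly when $\Omega$ abuts a codimension-one $\Sigma_i$ from both sides. What actually closes this step in \cite{bonini2015hypersurfaces} (and hence in the paper, which simply cites it) is convexity: by \eqref{Eq:FlowPC} the principal curvatures $\kappa_i^t=(\kappa_i+\tanh t)/(1+\kappa_i\tanh t)$ tend uniformly to $1$, so for large $t$ the flowed hypersurface is uniformly convex, and it is the supporting-horosphere/convexity argument combined with the assumed structure of $\partial\Omega$ that rules out self-intersections near infinity. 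Your sketch never uses convexity of $\phi^t$, so as written it does not prove injectivity near $\partial\Omega$; to repair it you should either import the convexity argument of Theorem 3.6 of \cite{bonini2015hypersurfaces} (checking, as the paper does, that its only asymptotic input is $\beta=e^{2\rho}+|\nabla^{g_{\mathbb{S}^n}}\rho|^2_{g_{\mathbb{S}^n}}\to\infty$ rather than $e^\rho\to\infty$), or supply a genuinely new quantitative separation estimate near each $\Sigma_i$, which your thin-tube reduction does not provide.
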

 
Due to Corollary 4.4 and Proposition 1.3 in \cite{epstein1987asymptotic}, when the asymptotic boundary of a hypersurface $\phi:M^n \to \mathbb{H}^n$ is a single point $\partial_{\infty}\phi(M)=\{x\} \subset \mathbb{S}^n$, we are able to recover one direction of the Global Correspondence Theorem \ref{Thm:GCT} and the Embeddedness Theorem \ref{Thm:EmbFlow} without a prior assuming that the hyperbolic Gauss map $G:M^n \to \mathbb{S}^n$ is injective.

\begin{thm}\label{Thm:CorrSinglePtBd}
For $n\geq 2$, suppose that $\phi : M^n\to\mathbb{H}^{n+1}$ is an immersed, complete, uniformly weakly horospherically convex hypersurface with the boundary at infinity $\partial_{\infty}\phi(M)=\{x\} \subset \mathbb{S}^n$ a single point. Then it induces a complete conformal metric $\hat{g}=e^{2\rho}g_{\mathbb{S}^{n}}$ on the image of the hyperbolic Gauss map $G(M)\subset\mathbb{S}^{n}$ with bounded $2$-tensor $P$ defined as in \eqref{Eq:2TensorP} where $\rho$ is the horospherical support function of $\phi$ and
\begin{equation}\label{Eq:Boundary}
\partial_{\infty}\phi(M)=\partial G(M).
\end{equation}
Moreover, $\phi$ can be unfolded into an embedded hypersurface along its geodesic normal flow eventually.
\end{thm}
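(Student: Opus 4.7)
The strategy is to reduce the statement to the forward direction of Theorem \ref{Thm:GCT} and to Theorem \ref{Thm:EmbFlow} by proving that under the single-point boundary hypothesis $\partial_{\infty}\phi(M)=\{x\}$ the hyperbolic Gauss map $G: M^n \to \mathbb{S}^n$ is automatically injective. Once injectivity is established, the induced conformal metric on $G(M)\subset \mathbb{S}^n$, the bounded $2$-tensor $P$, and the boundary identity $\partial_{\infty}\phi(M)=\partial G(M)$ are immediate from the forward half of Theorem \ref{Thm:GCT}. Moreover, the asymptotic boundary $\partial G(M)=\{x\}$ is trivially a smooth, compact, embedded $0$-dimensional submanifold of $\mathbb{S}^n$ with empty boundary, so eventual embeddedness under the geodesic normal flow follows directly from Theorem \ref{Thm:EmbFlow}.

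The key new input is the injectivity step. I would pass to the upper half-space model of $\mathbb{H}^{n+1}$ with coordinates chosen so that the asymptotic boundary point $x$ coincides with the point at infinity. Proposition 1.3 of \cite{epstein1987asymptotic} then furnishes a precise asymptotic description of a complete, uniformly weakly horospherically convex hypersurface whose ideal boundary collapses to a single ideal point; in effect, near $x$ the hypersurface $\phi$ is pinched inside a shrinking family of horoballs centered at $x$. Corollary 4.4 of the same paper then combines this asymptotic description with the uniform strict convexity encoded in uniform weak horospherical convexity to exclude any folding of the Gauss map, yielding that $G$ is a global diffeomorphism of $M^n$ onto its image in $\mathbb{S}^n \setminus \{x\}$.

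With $G$ known to be injective, the horospherical support function $\rho$ descends unambiguously from $M^n$ to $G(M)$, the bounded $2$-tensor $P$ of $\hat{g}=e^{2\rho}g_{\mathbb{S}^n}$ is inherited from the uniform weak horospherical convexity of $\phi$, and the remaining assertions reduce to citing Theorems \ref{Thm:GCT} and \ref{Thm:EmbFlow}. The main obstacle is the injectivity step itself: one must carefully align the support-function framework of \cite{epstein1987asymptotic} with the horospherical support function $\rho$ used in this paper, and verify that the uniform weak horospherical convexity assumption supplies precisely the strict convexity condition needed to apply Corollary 4.4 at the single ideal boundary point. Everything beyond this invocation is direct machinery from the earlier results of the paper.
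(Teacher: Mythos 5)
Your overall reduction is the same as the paper's: establish injectivity of the hyperbolic Gauss map, then quote the forward direction of Theorem \ref{Thm:GCT} and Theorem \ref{Thm:EmbFlow} (a single point is indeed an admissible asymptotic boundary for the latter). However, the injectivity step as you have written it does not go through. Uniform weak horospherical convexity only says the principal curvatures satisfy $\kappa_i \geq \kappa_0 > -1$; the $\kappa_i$ may perfectly well be negative, so the hypersurface $\phi$ need not be convex in any ordinary sense. Epstein's Proposition 1.3 and Corollary 4.4 in \cite{epstein1987asymptotic} are statements about convex surfaces, so your phrase ``the uniform strict convexity encoded in uniform weak horospherical convexity'' attributes to the hypothesis exactly the property it does not provide, and applying those results directly to $\phi$ is unjustified. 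This is precisely the gap the paper's argument is designed to close.

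The paper inserts one extra move that your proposal is missing. Weak horospherical convexity gives local injectivity of $G$ via \eqref{Eq:LCmetric}, which is enough to run the geodesic normal flow $\phi^t$; by \eqref{Eq:FlowPC}, $\kappa_i^t=(\kappa_i+\tanh t)/(1+\kappa_i\tanh t)$, so for $t$ sufficiently large the flowed hypersurface $\phi^t$ is uniformly convex, while its asymptotic boundary is still the single point $x$. Epstein's Corollary 4.4 and Proposition 1.3 (extended to higher dimensions) are then applied to $\phi^t$, not to $\phi$, to obtain injectivity of the Gauss map of $\phi^t$; since the hyperbolic Gauss map is invariant under the normal flow, $G$ itself is injective, $\phi$ is admissible, and the Global Correspondence and Embeddedness Theorems finish the proof exactly as you describe. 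So your write-up needs this flow-to-convexity step (or an alternative argument that genuinely works under mere weak horospherical convexity) before the appeal to Epstein's results is legitimate; as it stands, the central claim of the proposal is unsupported.
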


Together with the work on elliptic problems in \cite{bonini2015hypersurfaces}, Theorem \ref{Thm:CorrSinglePtBd} leads to a stronger Bernstein type theorem for hypersurfaces in $\mathbb{H}^{n+1}$ .

\begin{thm}\label{Thm:Bernstein}
For $n\geq2$, suppose that $\phi: M^n\to\mathbb{H}^{n+1}$ is an immersed, complete, uniformly weakly horospherically convex hypersurface with constant mean curvature. Then it is a horosphere if its boundary at infinity is a single point in $\mathbb{S}^n$.
\end{thm}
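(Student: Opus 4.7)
The strategy is to combine Theorem \ref{Thm:CorrSinglePtBd} with the elliptic classification of constant mean curvature conformal metrics established in \cite{bonini2015hypersurfaces}. Since $\partial_{\infty}\phi(M) = \{x\}$, Theorem \ref{Thm:CorrSinglePtBd} immediately produces a complete conformal metric $\hat{g} = e^{2\rho} g_{\mathbb{S}^n}$ on $G(M) \subset \mathbb{S}^n$ with bounded $2$-tensor $P$ given by \eqref{Eq:2TensorP}, together with the boundary consistency $\partial G(M) = \partial_{\infty}\phi(M) = \{x\}$. In particular $G(M) = \mathbb{S}^n \setminus \{x\}$, so $\hat{g}$ is a realizable metric on the once-punctured sphere.

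Next, under the local correspondence between the principal curvatures of $\phi$ and the eigenvalues of $P$ with respect to $\hat{g}$ from \cite{espinar2009hypersurfaces}, the assumption that $\phi$ has constant mean curvature translates into an elliptic equation of Weingarten type satisfied by $\rho$ on $\mathbb{S}^n \setminus \{x\}$. This is exactly the setting treated by the elliptic rigidity results in \cite{bonini2015hypersurfaces}. Composing with stereographic projection centered at $x$ turns the problem into a fully nonlinear elliptic PDE for the new conformal factor on $\mathbb{R}^n$; the uniform weak horospherical convexity gives the admissibility of the equation, while completeness of $\hat{g}$ together with the single-point asymptotic boundary pin down the behavior at infinity. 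The method of moving planes (or moving spheres), carried out in \cite{bonini2015hypersurfaces}, then forces the solution to be rotationally symmetric about $x$, and the only uniformly weakly horospherically convex constant mean curvature hypersurface compatible with this symmetry and with a one-point asymptotic boundary is a horosphere.

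The essential new content of this argument beyond citation is the use of Theorem \ref{Thm:CorrSinglePtBd} in place of the earlier global correspondence, which required an a priori injective hyperbolic Gauss map. That injectivity was the principal obstacle to stating the Bernstein theorem in this strong form: previously one could only treat constant mean curvature hypersurfaces under an additional injectivity hypothesis on $G$. Theorem \ref{Thm:CorrSinglePtBd} removes this obstruction whenever $\partial_{\infty}\phi(M)$ is a single point, so the conclusion follows directly from the elliptic classification already in hand.
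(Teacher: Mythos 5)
Your first step coincides with the paper's: the single-point hypothesis lets you invoke Theorem \ref{Thm:CorrSinglePtBd} in place of an a priori injective Gauss map, and that is indeed the new content of this theorem. The gap is in how you finish. You never use the second conclusion of Theorem \ref{Thm:CorrSinglePtBd} --- that $\phi$ unfolds along the normal flow into a \emph{properly embedded} hypersurface --- yet that is precisely the hypothesis the elliptic machinery of \cite{bonini2015hypersurfaces} needs. The generalized Bernstein theorem there (Theorem 4.4) is a statement about properly embedded, uniformly weakly horospherically convex hypersurfaces satisfying an elliptic Weingarten equation, proved by reflection-type arguments on the hypersurface itself; it is not a metric-side moving-planes classification on $\mathbb{S}^n\setminus\{x\}$ that you can quote independently. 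Indeed, in the present paper the two-dimensional Liouville statement (Corollary \ref{Cor:Liouville}) is \emph{deduced from} the Bernstein theorem via the correspondence, so for $n=2$ your route is circular as written; and the higher-dimensional Liouville theorem cited (\cite{caffarelli1989asymptotic}) concerns the scalar-curvature equation, which is not the equation that constant mean curvature corresponds to under \eqref{Eq:LambdaKappa} (CMC translates to $\sum_i \frac{2}{1-2\lambda_i}=\mathrm{const}$, not $\sigma_1(\lambda)=\mathrm{const}$). Your closing assertion that symmetry plus one-point boundary forces a horosphere is also left unargued.

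The paper's actual proof is shorter and stays on the hypersurface side: by Theorem \ref{Thm:CorrSinglePtBd2}, for $t$ large the flowed hypersurface $\phi^t$ is properly embedded, uniformly weakly horospherically convex, with single-point boundary at infinity; by \eqref{Eq:FlowPC} the constant mean curvature of $\phi$ becomes an elliptic Weingarten equation satisfied by $\phi^t$; Theorem 4.4 of \cite{bonini2015hypersurfaces} then applies to $\phi^t$, and flowing back gives that $\phi$ is a horosphere. To repair your argument you would either have to route it through this embedded flowed hypersurface, or supply an independent proof of the fully nonlinear, conformally invariant Liouville-type classification on the punctured sphere (valid in particular for $n=2$), which is exactly the analysis you are currently assuming.
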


Moreover, with the global correspondence for dimension $n=2$ in hand, we can extend the Liouville and Delaunay type theorems in \cite{bonini2015hypersurfaces} to the cases of conformal metrics on domains on the standard $2$-sphere $\mathbb{S}^2$ and surfaces immersed in hyperbolic space ${\mathbb H}^3$. Collectively, these results demonstrate the usefulness of the global correspondence by providing a bridge between elliptic problems of hypersurfaces and those of conformal metrics. In particular, under the correspondence the Bernstein theorem for surfaces is found to be equivalent to the following Liouville type result for conformal metrics, whose higher dimensional version can be found in \cite{caffarelli1989asymptotic}.

\begin{cor}\label{Cor:Liouville}
Let $g=e^{2\rho}g_{\mathbb{S}^2}$ be a complete, conformal metric on ${\mathbb S}^2\setminus\{p\}$ with bounded $2$-tensor $P_g$. If the eigenvalues of $P_g$ satisfy a conformally invariant elliptic problem of conformal metrics as defined in \cite{bonini2015hypersurfaces}, then $g$ must be the Euclidean metric.
\end{cor}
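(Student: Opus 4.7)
The plan is to use the newly extended $n=2$ Global Correspondence Theorem \ref{Thm:GCT} to convert the conformal metric problem on $\mathbb{S}^2 \setminus \{p\}$ into a Weingarten surface problem in $\mathbb{H}^3$ with single-point asymptotic boundary, and then invoke Bernstein-type rigidity of such surfaces.

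First, starting from the given complete conformal metric $g = e^{2\rho}g_{\mathbb{S}^2}$ on $\Omega = \mathbb{S}^2 \setminus \{p\}$ with bounded $2$-tensor $P_g$, I would apply the second half of Theorem \ref{Thm:GCT} (now valid for $n=2$ thanks to Lemma \ref{Lem:Gradient}). This produces, for all sufficiently large $t$, a properly immersed, complete, uniformly weakly horospherically convex surface $\phi^t : \mathbb{S}^2 \setminus \{p\} \to \mathbb{H}^3$ with $\partial_{\infty}\phi^t(\mathbb{S}^2\setminus\{p\}) = \partial\Omega = \{p\}$, a single point. By the local correspondence of \cite{espinar2009hypersurfaces}, the principal curvatures of $\phi^t$ are explicit functions of the eigenvalues of $P_g$; consequently the hypothesis that the eigenvalues of $P_g$ satisfy a conformally invariant elliptic problem of conformal metrics in the sense of \cite{bonini2015hypersurfaces} translates directly into the statement that $\phi^t$ is an elliptic Weingarten surface of the class studied there.

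Second, since $\phi^t$ is a complete, uniformly weakly horospherically convex, elliptic Weingarten surface in $\mathbb{H}^3$ whose asymptotic boundary is a single point, I would invoke the Bernstein-type argument of \cite{bonini2015hypersurfaces}, which is now accessible in dimension $n=2$ via Theorem \ref{Thm:CorrSinglePtBd} (this is precisely the mechanism that yields Theorem \ref{Thm:Bernstein} in the constant mean curvature case). This forces $\phi^t$ to be a horosphere tangent to $\mathbb{S}^2$ at $p$. Translating back, the horospherical support function of such a horosphere corresponds, through the stereographic chart based at $p$, to the flat Euclidean metric on $\mathbb{S}^2 \setminus \{p\}$, and hence $g$ is Euclidean as claimed.

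The main obstacle is making the elliptic Weingarten Bernstein argument of \cite{bonini2015hypersurfaces} function in dimension $n=2$. In the higher dimensional setting that argument relied on Proposition 8.1 of \cite{chang2004class}, which is invalid when $n=2$; the whole point of the present paper is that Lemma \ref{Lem:Gradient}, together with Theorems \ref{Thm:GCT} and \ref{Thm:CorrSinglePtBd}, supplies exactly the asymptotic control that was missing, allowing the same PDE and geometric argument to push through unchanged. A secondary subtlety is verifying that the conformally invariant ellipticity of the problem on $P_g$ corresponds precisely to the ellipticity class of Weingarten equations to which the Bernstein theorem applies, which is immediate from the explicit relationship between eigenvalues of $P_g$ and the principal curvatures of $\phi^t$.
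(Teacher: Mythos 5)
Your argument is essentially the paper's own: the authors present Corollary \ref{Cor:Liouville} as an immediate consequence of the two-dimensional global correspondence together with the elliptic-problem (generalized Bernstein) results of \cite{bonini2015hypersurfaces}, which is exactly the chain you describe --- pass from $g$ to the admissible surfaces $\phi^t$ with $\partial_\infty\phi^t(\Omega)=\{p\}$, translate the conformally invariant elliptic condition into an elliptic Weingarten equation via $\lambda_i=\frac12-\frac{1}{1+\kappa_i}$, conclude $\phi^t$ is a horosphere, and translate back to $P\equiv 0$, i.e.\ the Euclidean metric. The only minor adjustment is that, starting from the metric side, the embeddedness needed for the Bernstein-type theorem comes from the Embeddedness Theorem \ref{Thm:EmbFlow} (a single point is a compact embedded submanifold without boundary) rather than from Theorem \ref{Thm:CorrSinglePtBd}, which is the device used when one starts from a hypersurface without assuming injectivity of the Gauss map.
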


At last, we remove the restriction $n\geq3$ on the dimension of the hypersurfaces in the Delaunay Theorem in \cite{bonini2015hypersurfaces}.

\begin{cor}\label{Cor:Delaunay}
Let $\phi: M^2\to\mathbb{H}^{3}$ be an immersed, complete, uniformly weakly horospherically convex hypersurface with boundary at infinity $\partial_{\infty}\phi(M)=\{p,q\}$ consisting of exactly two points. If the principal curvatures of $\phi$ satisfy an elliptic Weingarten equation as defined in \cite{bonini2015hypersurfaces}, then $\phi$ is rotationally symmetric with respect to the geodesic joining the two points at the infinity of $\phi$.  
Equivalently, let $g$ be a complete, conformal metric satisfying a conformally invariant elliptic problem of conformal metrics on $\Omega=\mathbb{S}^2\setminus\{p, q\}$. Then $g$ is cylindric with respect to the geodesic joining the two points $p$ and $q$.
\end{cor}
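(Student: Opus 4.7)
The plan is to reduce the corollary to its statement about conformal metrics via the now-available Global Correspondence Theorem \ref{Thm:GCT} for $n=2$, and then run the same moving-plane argument used in the $n\geq 3$ proof of the Delaunay theorem in \cite{bonini2015hypersurfaces}. Under Theorem \ref{Thm:GCT}, the surface $\phi:M^2\to\mathbb{H}^3$ with $\partial_\infty\phi(M)=\{p,q\}$ corresponds to a complete conformal metric $\hat g=e^{2\rho}g_{\mathbb{S}^2}$ on $\Omega=\mathbb{S}^2\setminus\{p,q\}$ with bounded $2$-tensor $P$, and an elliptic Weingarten equation on the principal curvatures of $\phi$ (which are the horospherical curvatures) translates into a conformally invariant elliptic equation on the eigenvalues of $P$ as in \eqref{Eq:2TensorP}. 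The two formulations in the statement are therefore equivalent, and it suffices to establish the symmetry of $\hat g$.

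Next, I would use a conformal diffeomorphism to identify $\Omega$ with the cylinder $\mathbb{R}\times\mathbb{S}^1$, for instance via stereographic projection sending $p\mapsto 0$ and $q\mapsto\infty$ followed by $z\mapsto\log z$. Conformal invariance of the equation on $P$ means that the transformed conformal factor $u$ satisfies an elliptic equation on the cylinder which is invariant under translations along the axis and under rotations of $\mathbb{S}^1$. Completeness of $\hat g$ together with Lemma \ref{Lem:Gradient} yields $e^{2u}+|\nabla u|^2\to\infty$ at the two ends of the cylinder, which is the key asymptotic control near the two punctures.

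With this asymptotic control in hand, I would apply the Alexandrov moving-plane method with hyperplanes perpendicular to the cylinder axis, exactly as in the $n\geq 3$ argument of \cite{bonini2015hypersurfaces}: the conformal invariance of the equation guarantees that each reflection produces another admissible solution on the reflected domain, and the blow-up at the ends allows the procedure to start from either end. The outcome is that $u$ is invariant under reflection across some cross-section, and running the procedure from both ends forces $u$ to depend only on the axial coordinate. Pulling this back via the correspondence shows that $\hat g$ is cylindric and that $\phi$ is rotationally symmetric about the geodesic joining $p$ and $q$.

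The main obstacle, and the sole reason the $n=2$ case was excluded in \cite{bonini2015hypersurfaces}, is precisely the initialization of the moving-plane procedure, which in higher dimensions was supplied by Proposition 8.1 of \cite{chang2004class} but breaks when $n=2$. The combination of Lemma \ref{Lem:Gradient} and Theorem \ref{Thm:GCT} is exactly the replacement for that missing asymptotic input via a geometric route, and once it is in place the remainder of the Delaunay proof in \cite{bonini2015hypersurfaces} carries over without modification to surfaces in $\mathbb{H}^3$.
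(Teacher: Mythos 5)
Your proposal matches the paper's route: the paper treats this corollary as an immediate consequence of the newly established two-dimensional Global Correspondence Theorem (made possible by Lemma \ref{Lem:Gradient} replacing Proposition 8.1 of \cite{chang2004class}) together with the elliptic/Delaunay machinery of \cite{bonini2015hypersurfaces}, which is exactly the reduction you carry out, with the moving-plane details you sketch living in the cited paper rather than here. The only slight imprecision is attributing the old $n\geq 3$ restriction to the initialization of the moving-plane argument, whereas it actually entered through the correspondence itself (properness and boundary consistency); this does not affect the validity of your argument.
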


\noindent
Corollaries \ref{Cor:Liouville} and \ref{Cor:Delaunay}  are immediate consequences of the global correspondence in two dimensions and the work on elliptic problems in \cite{bonini2015hypersurfaces}. Since our purpose is mainly to demonstrate the usefulness of the global correspondence between hypersurfaces and conformal metrics, we refer the reader to \cite{bonini2015hypersurfaces} for their precise statements and proofs.
 
This article is organized as follows. In section \ref{Sect:Local}, we will recall the local theory developed in \cite{espinar2009hypersurfaces}. In section \ref{Sect:Global}, we establish Lemma \ref{Lem:Gradient} and consequently the Global Correspondence Theorem \ref{Thm:GCT} for all dimensions $n\geq2$. In section \ref{Sect:FlowEmb} we use the global correspondence to establish the Embeddness Theorem \ref{Thm:EmbFlow}. In one direction, we are able to improve both of these results in the special case of hypersurfaces with single point asymptotic boundaries. Based on the techniques of \cite{bonini2015hypersurfaces}, we then use this improved correspondence to prove our new, stonger Bernstein type theorem.

% ----------------------------------------------------------------

\section{Local Theory}\label{Sect:Local}

In this section we introduce the basic constructions and terminology used in \cite{bonini2015hypersurfaces},\cite{bonini2016nonnegatively},\cite{espinar2009hypersurfaces} and references therein. We also restate the local correspondence developed in \cite{espinar2009hypersurfaces} between hypersurfaces in hyperbolic space $\mathbb{H}^{n+1}$ and conformal metrics on domains of $\mathbb{S}^n$. 
 
% ----------------------------------------------------------------

\subsection{Weak Horospherical Convexity and the Horospherical Metric}  

For $n\geq2$, let us denote Minkowski spacetime by $\mathbb{R}^{1,n+1}$, that is, the vector space $\mathbb{R}^{n+2}$ endowed with the Minkowski spacetime metric $\langle, \rangle$ given by\begin{equation*}
\langle \bar{x},\bar{x}\rangle=-x_0^2+\sum_{i=1}^{n+1}x_i^2,
\end{equation*}
where $\bar{x}=(x_0,x_1,\dots,x_{n+1})\in\mathbb{R}^{n+2}$. Then hyperbolic space, de Sitter spacetime and the positive null cone are given by
\begin{equation*}
\begin{split}
& \mathbb{H}^{n+1}=\{\bar{x}\in\mathbb{R}^{1,n+1}|\langle \bar{x},\bar{x}\rangle=-1, x_0>0\},\\
&\mathbb{S}^{1,n}=\{\bar{x}\in\mathbb{R}^{1,n+1}|\langle \bar{x},\bar{x}\rangle=1\},\\
&\mathbb{N}^{n+1}_+=\{\bar{x}\in\mathbb{R}^{1,n+1}|\langle \bar{x},\bar{x}\rangle=0, x_0>0\},
\end{split}  
\end{equation*}
respectively. We identify the ideal boundary at infinity of hyperbolic space $\mathbb{H}^{n+1}$ with the unit round sphere $\mathbb{S}^n$ sitting at height $x_0=1$ in the null cone $\mathbb{N}^{n+1}_+$ of Minkowski space $\mathbb{R}^{1,n+1}$.
 
An immersed hypersurface in $\mathbb{H}^{n+1}$ is given by a parametrization
\begin{equation*}
\phi : M^n\to\mathbb{H}^{n+1}
\end{equation*}
and an orientation assigns a unit normal vector field
\begin{equation*}
\eta : M^n\to\mathbb{S}^{1,n}.
\end{equation*}
Horospheres are used to define the hyperbolic Gauss map of an oriented, immersed hypersurface in $\mathbb{H}^{n+1}$. In the hyperboloid model $\mathbb{H}^{n+1}$, horospheres are the intersections of affine null hyperplanes of $\mathbb{R}^{1,n+1}$ with $\mathbb{H}^{n+1}$.

\begin{defn}\label{Def:GaussMap}
Let $\phi : M^n\to\mathbb{H}^{n+1}$ be an immersed, oriented hypersurface in $\mathbb{H}^{n+1}$ with unit normal field $\eta : M^n\to\mathbb{S}^{1,n}$. The hyperbolic Gauss map 
\begin{equation*}
G : M^n\to\mathbb{S}^{n}
\end{equation*}
of $\phi$ is defined as follows: for every $p\in M^n$, $G(p)\in\mathbb{S}^n$ is the point at infnity of the unique horosphere in $\mathbb{H}^{n+1}$ passing through $\phi(p)$ whose outward unit normal agrees with $\eta(p)$ at $\phi(p)$.
\end{defn}

Given an immersed, oriented hypersurface $\phi : M^n\to\mathbb{H}^{n+1}$ with unit normal field $\eta : M^n\to\mathbb{S}^{1,n}$, the light cone map $\psi$ associated to $\phi$ is defined
\begin{equation*}
\psi=\phi-\eta : M^n\to\mathbb{N}^{n+1}_+.
\end{equation*}  
With the identification of the ideal boundary at infinity of hyperbolic space $\mathbb{H}^{n+1}$ with the unit round sphere $\mathbb{S}^n$ sitting $x_0=1$, we have
\begin{equation*}
\psi=e^{\tilde{\rho}}(1,G),
\end{equation*}  
where $\psi_0=e^{\tilde{\rho}}$ is the horospherical support function of $\phi$. Note that with our convention given in Definition \ref{Def:GaussMap}, horospheres with outward orientation are the unique surfaces such that both the hyperbolic Gauss map and the associated light cone map are constant. Moreover, if $x \in \mathbb{S}^n$ is the point at infinity of such a horosphere, then $\psi = e^{\tilde{\rho}}(1,x)$ where $\tilde{\rho}$ is the signed hyperbolic distance of the horosphere to the point $\mathcal{O}=(1,0,\dots,0) \in \mathbb{H}^{n+1} \subseteq \mathbb{R}^{1,n+1}$. 

We use supporting horospheres to introduce the following notion of weak horospherical convexity.

\begin{defn}\cite{bonini2015hypersurfaces}\cite{espinar2009hypersurfaces}\label{def2.2}
Let $\phi : M^n\to\mathbb{H}^{n+1}$ be an immersed, oriented hypersurface in $\mathbb{H}^{n+1}$ with unit normal field $\eta : M^n\to\mathbb{S}^{1,n}$. Let $\mathcal{H}_p$ denote the horosphere in $\mathbb{H}^{n+1}$ that is tangent to the hypersurface at $\phi(p)$ and whose outward unit normal at $\phi(p)$ agrees with the unit normal $\eta(p)$. We will say that $\phi : M^n\to\mathbb{H}^{n+1}$ is weakly horospherically convex at $p$ if there exists a neighborhood $V\subset M^n$ of $p$ so that $\phi(V\setminus\{p\})$ does not intersect with $\mathcal{H}_p$. Moreover, the distance function of the hypersurface $\phi : M^n\to\mathbb{H}^{n+1}$ to the horosphere $\mathcal{H}_p$ does not vanish up to the second order at $\phi(p)$ in any direction.
\end{defn}
  
From the definition above, we have following corollary.
\begin{cor}\cite{espinar2009hypersurfaces}
Let $\phi : M^n\to\mathbb{H}^{n+1}$ be an immersed, oriented hypersurface in $\mathbb{H}^{n+1}$. Then $\phi$ is weakly horospherically convex at $p$ if and only if all the principal curvatures of $\phi$ at $\phi(p)$ are simultaneously $<-1$ or $>-1$.
\end{cor}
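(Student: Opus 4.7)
The plan is to compare $\phi$ with the tangent horosphere $\mathcal{H}_p$ via a second-order Taylor expansion of their relative height in geodesic normal coordinates on $\mathbb{H}^{n+1}$ centered at $\phi(p)$, with $\eta(p)$ chosen as the transverse direction. Because $\mathcal{H}_p$ and $\phi(M)$ share tangent plane and unit normal at $\phi(p)$, both hypersurfaces become graphs $u_{\mathcal{H}}$ and $u_\phi$ over the common tangent hyperplane with vanishing value and gradient at the origin, so all of the relevant local information is stored in their Hessians, which in these coordinates coincide with the respective second fundamental forms.

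The first concrete step is to pin down the shape operator of $\mathcal{H}_p$. Using the paper's own description, the light cone map $\psi=\phi-\eta$ is constant on a horosphere, so $d\eta=d\phi$ on $\mathcal{H}_p$. Combined with the fact that $\bar{\nabla}_X\eta = D_X\eta$ in Minkowski coordinates (because $\langle\eta,\phi\rangle=0$ and $\langle\eta,X\rangle=0$ for tangent $X$), the Weingarten equation then forces the shape operator of $\mathcal{H}_p$ to equal $-\mathrm{Id}$; equivalently, all principal curvatures of $\mathcal{H}_p$ at $\phi(p)$ are $-1$. Writing $\kappa_1,\dots,\kappa_n$ for the principal curvatures of $\phi$ at $\phi(p)$ and $A^\phi$ for its shape operator, this yields the expansion
\begin{equation*}
u_\phi(x)-u_{\mathcal{H}}(x)=\tfrac{1}{2}\,\langle(A^\phi+\mathrm{Id})x,x\rangle+o(|x|^2),
\end{equation*}
whose quadratic part has eigenvalues $\kappa_i+1$.

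The final step invokes both clauses of Definition \ref{def2.2}. The non-intersection clause $\phi(V\setminus\{p\})\cap\mathcal{H}_p=\emptyset$ forces $u_\phi-u_{\mathcal{H}}$ to be nowhere zero on a connected punctured neighborhood of the origin (connected since $n\geq 2$), so $u_\phi-u_{\mathcal{H}}$ has constant sign there; continuity of the leading quadratic form then yields that $A^\phi+\mathrm{Id}$ is semidefinite of that sign. The non-degeneracy clause, that the distance does not vanish to second order in any direction, rules out zero eigenvalues, upgrading the semidefiniteness to strict definiteness. Therefore every $\kappa_i+1$ is simultaneously positive or simultaneously negative, proving the forward direction; the reverse is immediate from the same expansion, since strict definiteness of $A^\phi+\mathrm{Id}$ yields both non-intersection and non-degeneracy at once.

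The main obstacle I expect is sign-bookkeeping rather than genuine difficulty: matching the outward-normal convention of Definition \ref{Def:GaussMap} with the Weingarten sign so that $\mathcal{H}_p$ indeed has principal curvatures equal to $-1$, and verifying that the ambient curvature corrections in the hyperbolic metric (which enter at order $|x|^2$ in geodesic normal coordinates) cancel in the leading term of the \emph{difference} $u_\phi-u_{\mathcal{H}}$, precisely because both hypersurfaces are tangent at $\phi(p)$ to the same plane with the same normal. Once this cancellation is in hand, the problem collapses to the Euclidean graph comparison above.
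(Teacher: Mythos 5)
Your proof is correct, and its sign bookkeeping is consistent with the paper's conventions: by \eqref{Eq:LCmetric}, a horosphere with the outward orientation of Definition \ref{Def:GaussMap} has constant light cone map, i.e. $d\psi=0$ and hence $\kappa_i\equiv-1$, which is exactly your claim that the shape operator of $\mathcal{H}_p$ is $-\mathrm{Id}$; and your handling of the two clauses of Definition \ref{def2.2} (constant sign of $u_\phi-u_{\mathcal{H}}$ on a connected punctured ball gives semidefiniteness of $A^\phi+\mathrm{Id}$, the no-second-order-vanishing clause upgrades this to definiteness, and definiteness conversely yields both clauses) is complete. Note, however, that the paper itself gives no proof of this corollary — it is quoted from \cite{espinar2009hypersurfaces} — so the relevant comparison is with the argument there, which is the standard coordinate-free one: horospheres are the level sets $\{x\in\mathbb{H}^{n+1}:\langle x,\psi_p\rangle=-1\}$ of the Minkowski inner product with the null vector $\psi_p=\phi(p)-\eta(p)$, so one studies $f(q)=\langle\phi(q),\psi_p\rangle+1$, which vanishes to first order at $p$, whose sign records the side of $\mathcal{H}_p$ on which $\phi(q)$ lies, and whose Hessian at $p$ is $-(\mathrm{II}+g)$ by the Gauss formula, with eigenvalues $-(\kappa_i+1)$. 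That route buys exactness — no error terms and no need to verify that ambient corrections cancel, since the level sets of $f$ are the horospheres themselves — whereas your geodesic-normal-coordinate graph comparison reaches the same quadratic form $A^\phi+\mathrm{Id}$ by a more elementary, self-contained argument at the cost of the coordinate bookkeeping you correctly flag (Christoffel symbols vanish at the center, so each graph's Hessian at the origin is its second fundamental form and the corrections only enter at order $o(|x|^2)$ in the difference).
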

 
Let $\{e_1,\dots,e_n\}$ denote an orthonormal basis of principal directions of $\phi$ at $p$ and let $\kappa_1,\dots,\kappa_n$ denote the associated principal curvatures. Then, as in \cite{espinar2009hypersurfaces}, it follows that
\begin{equation}\label{Eq:LCmetric}
\langle(d\psi)_p(e_i),(d\psi)_p(e_j)\rangle=(1+\kappa_i)^2\delta_{ij}=e^{2\tilde{\rho}}\langle(dG)_p(e_i),(dG)_p(e_j)\rangle_{g_{\mathbb{S}^n}},
\end{equation}
where $\tilde{\rho}$  is the horospherical support function of $\phi$ and $g_{\mathbb{S}^n}$ denotes the standard round metric on $\mathbb{S}^n$. Hence, the hyperbolic Gauss map of a weakly horospherically convex hypersurface is a local diffeomorphism and can be used to define the so-called horospherical metric as follows.

\begin{defn}\label{Def:HorMetric}\cite{bonini2015hypersurfaces}\cite{espinar2009hypersurfaces}
Let $\phi : M^n\to\mathbb{H}^{n+1}$ be an immersed, weakly horospherically convex hypersurface in $\mathbb{H}^{n+1}$. Then the locally conformally flat metric
\begin{equation}
g_h=\psi^*\langle , \rangle=e^{2\tilde{\rho}}G^*g_{\mathbb{S}^n}
\end{equation}
on $M^n$ is called the horospherical metric of $\phi$.
\end{defn}

When the hyperbolic Gauss map $G : M^n\to\mathbb{S}^{n}$ of a weakly horospherically convex hypersurface $\phi : M^n\to\mathbb{H}^{n+1}$ is injective, one can push the horospherical metric $g_h$ onto the image $\Omega=G(M)\subset\mathbb{S}^n$ and consider the conformal metric 
\begin{equation*}
\hat{g}=(G^{-1})^*g_h=e^{2\rho}g_{\mathbb{S}^n},
\end{equation*}
where $\rho=\tilde{\rho}\circ G^{-1}$. For simplicity, we also refer to this conformal metric $\hat{g}$ as the horospherical metric. On the other hand, given a conformal metric $\hat{g}=e^{2\rho}g_{\mathbb{S}^n}$ on a domain $\Omega$ in $\mathbb{S}^n$, one recovers the light cone map 
\begin{equation*}
\psi(x)=e^{\rho}(1,x) : \Omega\to\mathbb{N}^{n+1}_+.
\end{equation*}
Then one can solve for the map $\phi : \Omega\to\mathbb{H}^{n+1}$ and the unit normal vector $\eta : \Omega\to\mathbb{S}^{1,n}$ so that $\psi=\phi-\eta$. These facts and the discussion above lead to the local correspondence developed in  \cite{espinar2009hypersurfaces}.

\begin{thm}[Local Correspondence Theorem \cite{espinar2009hypersurfaces}]\label{Thm:LCT}
For $n\geq 2$, let $\phi : \Omega\subseteq\mathbb{S}^n\to\mathbb{H}^{n+1}$ be a weakly horospherically convex hypersurface with hyperbolic Gauss map $G(x)=x$ the identity. Then $\psi=e^{\rho}(1,x)$ and 
\begin{equation}\label{Eq:Parametrization}
\phi=\frac{e^{\rho}}{2}(1+e^{-2\rho}(1+|\nabla^{g_{\mathbb{S}^n}}\rho|_{g_{\mathbb{S}^n}}^2))(1,x)+e^{-\rho}(0,-x+\nabla^{g_{\mathbb{S}^n}}\rho).
\end{equation}
Moreover, there is a symmetric $2$-tensor 
\begin{equation}\label{Eq:2TensorP2}
P=-\nabla_{g_{\mathbb{S}^n}}^2\rho+d\rho\otimes d\rho-\frac12(|\nabla^{g_{\mathbb{S}^n}}\rho|_{g_{\mathbb{S}^n}}^2-1)g_{\mathbb{S}^n}
\end{equation}
associated to the horospherical metric $\hat{g}=e^{2\rho}g_{\mathbb{S}^n}$ whose eigenvalues $\lambda_i$ are related to the principal curvature $\kappa_i$ of $\phi$ by 
\begin{equation}\label{Eq:LambdaKappa}
\lambda_i=\frac12-\frac{1}{1+\kappa_i}.
\end{equation}
Conversely, suppose that $\hat{g}=e^{2\rho}g_{\mathbb{S}^n}$ is a conformal metric  defined on a domain $\Omega\subseteq\mathbb{S}^n$ such that all the eigenvalues of the $2$-tensor $P$ defined as in \eqref{Eq:2TensorP2} are less than $\frac{1}{2}$. Then the map $\phi : \Omega\to\mathbb{H}^{n+1}$ given by \eqref{Eq:Parametrization} defines an immersed weakly horospherically convex hypersurface with hyperbolic Gauss map $G(x)=x$ the identity on $\Omega$. Moreover, the horospherical metric of $\phi$ is $\hat{g}$ and its principal curvatures satisfy the relation \eqref{Eq:LambdaKappa}.
\end{thm}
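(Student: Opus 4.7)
The plan is to prove both directions of the local correspondence by direct computation in the Minkowski model $\mathbb{R}^{1,n+1}$, exploiting the adapted frame $\{(1,x),\,(1,-x),\,(0,e_i)\}$ along $\mathbb{S}^n$, where $\{e_i\}$ is an orthonormal basis of $T_x\mathbb{S}^n$ and the only nonvanishing Minkowski pairings among these are $\langle(1,x),(1,-x)\rangle = -2$ and $\langle(0,e_i),(0,e_j)\rangle = \delta_{ij}$. Under the hypothesis $G(x) = x$, the identity $\psi = e^{\tilde\rho}(1,G)$ from the preceding discussion reduces to $\psi = e^\rho(1,x)$ at once. To recover $\phi$, I write
\[
\phi = \alpha(1,x) + \beta(1,-x) + (0,W), \qquad W \in T_x\mathbb{S}^n,
\]
and impose the three algebraic conditions $\langle\phi,\phi\rangle = -1$, $\langle\eta,\eta\rangle = 1$, and $\langle\phi,\eta\rangle = 0$ (with $\eta := \phi - \psi$), along with the tangency relation $\langle d\psi,\phi\rangle = 0$, which follows from $d\phi\perp\phi$ and $\langle d\eta,\phi\rangle = -\langle\eta,d\phi\rangle = 0$. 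Routine manipulation gives $\beta = e^{-\rho}/2$, then $W = e^{-\rho}\nabla^{g_{\mathbb{S}^n}}\rho$, and finally $\alpha = \tfrac{e^\rho}{2}(1 + |W|^2)$; reassembling these reproduces the stated formula for $\phi$.

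To identify the $2$-tensor $P$, I differentiate $\psi = \phi - \eta$ in principal directions of the shape operator and use $d\eta(e_i) = -\kappa_i\,d\phi(e_i)$ to obtain $d\psi(e_i) = (1+\kappa_i)\,d\phi(e_i)$; combined with \eqref{Eq:LCmetric}, this confirms $\psi^{*}\langle,\rangle = e^{2\rho}g_{\mathbb{S}^n}$. The conformal transformation law for the Schouten tensor together with $P_{g_{\mathbb{S}^n}} = \tfrac{1}{2}g_{\mathbb{S}^n}$ produces the displayed $P$ in \eqref{Eq:2TensorP2}, which thus represents the Schouten tensor of the horospherical metric read through $g_{\mathbb{S}^n}$. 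Comparing $d\phi$ computed directly from the explicit formula for $\phi$ with the decomposition $d\phi(e_i) = (1+\kappa_i)^{-1}d\psi(e_i)$, and matching components in the adapted frame, yields the eigenvalue identity $\lambda_i = \tfrac{1}{2} - \tfrac{1}{1+\kappa_i}$.

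For the converse, given $\hat{g} = e^{2\rho}g_{\mathbb{S}^n}$ whose $P$-eigenvalues all lie below $\tfrac{1}{2}$, I define $\phi$ by the explicit formula and set $\eta := \phi - \psi$ with $\psi = e^\rho(1,x)$. The identities $\langle\phi,\phi\rangle = -1$, $\langle\eta,\eta\rangle = 1$, and $\langle\phi,\eta\rangle = 0$ are verified by routine expansion in the adapted frame, placing $\phi$ in $\mathbb{H}^{n+1}$ with $\eta$ a unit normal. Rewriting the eigenvalue identity as the M\"obius relation $1+\kappa_i = 2/(1-2\lambda_i)$ shows that $\lambda_i < \tfrac{1}{2}$ forces $1+\kappa_i > 0$, so $d\phi(e_i) = (1+\kappa_i)^{-1}d\psi(e_i)$ is nonvanishing and $\phi$ is an immersion with principal curvatures strictly greater than $-1$; by definition this is weak horospherical convexity, and the hyperbolic Gauss map is $G(x) = x$ by construction.

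The main bookkeeping obstacle is the eigenvalue identification in the second paragraph, which requires careful manipulation of $d\phi$ in the adapted Minkowski frame and matching of second-order terms coming from $\nabla^2\rho$, $d\rho\otimes d\rho$, and $|\nabla\rho|^2 g_{\mathbb{S}^n}$. The algebraic pivot throughout is the M\"obius relation $1+\kappa_i = 2/(1-2\lambda_i)$, which simultaneously encodes both directions of the correspondence and translates the pointwise analytic hypothesis on the eigenvalues of $P$ into the geometric convexity condition $\kappa_i > -1$.
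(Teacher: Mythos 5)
The paper itself offers no proof of Theorem \ref{Thm:LCT}: it is restated from \cite{espinar2009hypersurfaces}, so the comparison is with the standard argument of that reference, which is precisely the route you take (direct computation in $\mathbb{R}^{1,n+1}$ in the frame $(1,x)$, $(1,-x)$, $(0,e_i)$). Your first paragraph is sound: $\psi=e^{\rho}(1,x)$ is immediate from $G=\mathrm{id}$, and the constraints $\langle\phi,\psi\rangle=-1$, $\langle d\psi,\phi\rangle=0$, $\langle\phi,\phi\rangle=-1$ do yield $\beta=e^{-\rho}/2$, $W=e^{-\rho}\nabla^{g_{\mathbb{S}^n}}\rho$ and $\alpha=\tfrac{e^{\rho}}{2}\bigl(1+e^{-2\rho}|\nabla^{g_{\mathbb{S}^n}}\rho|^{2}_{g_{\mathbb{S}^n}}\bigr)$, which reassemble into \eqref{Eq:Parametrization}.

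There are, however, two genuine gaps. First, the entire analytic content of the theorem is the identity \eqref{Eq:LambdaKappa}, and this is exactly the ``bookkeeping'' you defer and never carry out; as written it is asserted, not proved. The computation you need is: differentiating \eqref{Eq:Parametrization} with the sphere Gauss formula $D_{e_j}\nabla^{g_{\mathbb{S}^n}}\rho=\nabla^{2}_{g_{\mathbb{S}^n}}\rho(e_j)-\rho_j\,x$ and $(0,x)=\tfrac12\bigl[(1,x)-(1,-x)\bigr]$, the $(1,-x)$-component of $d\phi(e_j)$ cancels and, in a $g_{\mathbb{S}^n}$-orthonormal frame,
\begin{equation*}
d\phi(e_j)\;=\;\sum_i\Bigl(\tfrac12\,\delta_{ij}-e^{-2\rho}P_{ij}\Bigr)\,d\psi(e_i),
\end{equation*}
i.e.\ $d\phi=d\psi\circ(\tfrac12\,\mathrm{Id}-\hat P)$, where $\hat P$ is $P$ viewed as an endomorphism with respect to $\hat g=e^{2\rho}g_{\mathbb{S}^n}$, so its eigenvalues are the $\lambda_i$; combined with $d\psi(e_i)=(1+\kappa_i)\,d\phi(e_i)$ in principal directions this gives $(\tfrac12-\lambda_i)(1+\kappa_i)=1$, which is \eqref{Eq:LambdaKappa}. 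Second, your converse is circular as phrased: you invoke $d\phi(e_i)=(1+\kappa_i)^{-1}d\psi(e_i)$ and the M\"obius relation to conclude that $\phi$ is an immersion, but the $\kappa_i$ are principal curvatures of $\phi$ and are not defined until $\phi$ is already known to be an immersion with unit normal $\eta$; you also never check $\langle\eta,d\phi\rangle=0$, which is what makes $\eta$ a normal (it follows from $\langle d\psi(e_j),\phi\rangle=0$ and $\langle\psi,\phi\rangle\equiv-1$). The displayed identity repairs both points: since the $d\psi(e_i)$ are orthogonal spacelike vectors of length $e^{\rho}$, the condition $\lambda_i<\tfrac12$ makes $\tfrac12\,\mathrm{Id}-\hat P$ invertible, hence $\phi$ is an immersion, $\eta=\phi-\psi$ is a unit normal, $G(x)=x$, and reading the identity backwards gives $1+\kappa_i=(\tfrac12-\lambda_i)^{-1}>0$, i.e.\ weak horospherical convexity. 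Finally, justifying \eqref{Eq:2TensorP2} by the conformal transformation law of the Schouten tensor is only literally valid for $n\geq3$ (cf.\ Remark \ref{Thm:LCT}ff.); for $n=2$ the tensor $P$ is simply defined by that formula and must emerge from the computation above rather than from Schouten-tensor identities.
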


\begin{rmk}
When $n\geq3$, the symmetric $2$-tensor $P$ given by \eqref{Eq:2TensorP2} is exactly the Schouten tensor of the conformal metric $\hat{g}=e^{2\rho}g_{\mathbb{S}^n}$.
\end{rmk}

Now for a conformal metric $\hat{g}=e^{2\rho}g_{\mathbb{S}^n}$ on a domain $\Omega\subset\mathbb{S}^n$ with $2$-tensor $P$ bounded from above, one can consider a family of rescaled metric $\hat{g}^t=e^{2t}\hat{g}$. Choosing $t_0$ sufficiently large so that $e^{-2t}\lambda_i<\frac12$ for $t \geq t_0$, it follows from Theorem \ref{Thm:LCT} that the family of hypersurfaces 
\begin{equation}\label{Eq:rescale}
\phi^t=\frac{e^{\rho+t}}{2}(1+e^{-2(\rho+t)}(1+|\nabla^{g_{\mathbb{S}^n}}\rho|_{g_{\mathbb{S}^n}}^2))(1,x)+e^{-(\rho+t)}(0,-x+\nabla^{g_{\mathbb{S}^n}}\rho) : \Omega\to\mathbb{H}^{n+1}
\end{equation}
are immersed and weakly horospherically convex with hyperbolic Gauss maps the identity for $t\geq t_0$. Moreover, the eigenvalues $\lambda_i^t=e^{-2t}\lambda_i$ of the $2$-tensor $P_t$ associated to $\hat{g}_t$ and the principal curvatures $\kappa_i^t$ of the associated hypersurfaces $\phi_t$ satisfy the relation
\begin{equation}
\lambda_i^t=\frac{1}{2}-\frac{1}{1+\kappa_i^t}.
\end{equation}

% ----------------------------------------------------------------

\section{Global Theory}\label{Sect:Global}

In this section, we establish a global correspondence between properly immersed, complete, weakly horospherically convex hypersurfaces and complete conformal metrics on the domains of $\mathbb{S}^n$ for all dimensions $n\geq2$. To ensure a two-sided correspondence, we restrict ourselves to the cases of uniformly weakly horospherically convex hypersurfaces and conformal metrics with bounded 2-tensor $P$. 

\begin{defn}\cite{bonini2015hypersurfaces}
For $n\geq 2$, let $\phi: M^n\to\mathbb{H}^{n+1}$ be an immersed, oriented hypersurface. We say that $\phi$ is uniformly weakly horospherically convex if there is a constant $\kappa_0>-1$ such that the principal curvatures $\kappa_1, \dots, \kappa_n$ are greater than or equal to $\kappa_0$ at all points in $M^n$.
\end{defn}

One reason why we are interested in uniformly weakly horospherically convex hypersurfaces is the fact that under such curvature assumptions, the completeness of such hypersurfaces is equivalent to that of corresponding conformal metrics on domains of $\mathbb{S}^n$ due to \eqref{Eq:LCmetric}. It turns out that completeness is very important in establishing the consistency of boundaries. On the other hand, from the curvature relation \eqref{Eq:LambdaKappa}, one can easily see that given a conformal metric on a domain of $\mathbb{S}^n$ with 2-tensor $P$ bounded above, the corresponding immersed hypersurfaces $\phi^t$ given by \eqref{Eq:rescale} with $t$ sufficiently large are uniformly weakly horospherically convex if and only if $P$ is also bounded below. Based on these observations we focus on the classes of hypersurfaces and conformal metrics in the following definition.

\begin{defn}\label{Def:AdmissibleRealizable}
For $n\geq2$, an oriented hypersurface $\phi: M^n\to\mathbb{H}^{n+1}$ is said to be $\mathbf{admissible}$ if it is properly immersed, complete, and uniformly weakly horospherically convex with injective hyperbolic Gauss map $G: M^n\to\mathbb{S}^n$. Meanwhile, a complete metric $\hat{g}=e^{2\rho}g_{\mathbb{S}^n}$ on a domain $\Omega\subset\mathbb{S}^n$ is called a $\mathbf{realizable}$ metric if its $2$-tensor $P$ defined as in \eqref{Eq:2TensorP2} is bounded.
\end{defn}

Now, we can state our Global Correspondence Theorem.

\begin{thm}[Global Correspondence Theorem]\label{Thm:GCT2}
For $n\geq2$, an admissible hypersurface $\phi : M^n\to\mathbb{H}^{n+1}$ induces a realizable metric on the hyperbolic Gauss map image $\Omega=G(M)\subset\mathbb{S}^n$ with boundary at infinity $\partial_\infty\phi(M)=\partial\Omega$.

On the other hand, given a realizable metric $\hat{g}=e^{2\rho}g_{\mathbb{S}^n}$ on a domain $\Omega\subset\mathbb{S}^n$, the map $\phi^t$ given by \eqref{Eq:rescale} defines an admissible surface with $\partial_\infty\phi^t(\Omega)=\partial\Omega$ for all $t$ sufficiently large.
\end{thm}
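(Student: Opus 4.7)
The plan is to prove the two directions of the correspondence separately, using the Local Correspondence Theorem \ref{Thm:LCT} as the main bridge and invoking the new Lemma \ref{Lem:Gradient} to remove the dimensional restriction.

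For the forward direction, I would push the horospherical metric $g_h$ onto $\Omega=G(M)$ via the injective Gauss map, obtaining $\hat g=e^{2\rho}g_{\mathbb{S}^n}$ with $\rho=\tilde\rho\circ G^{-1}$. Uniform weak horospherical convexity $\kappa_i\geq\kappa_0>-1$ makes the factor $(1+\kappa_i)^2$ in \eqref{Eq:LCmetric} pinched between two positive constants, so $g_h$ is quasi-isometric to the induced hyperbolic metric and completeness transfers. Boundedness of $P$ comes directly from \eqref{Eq:LambdaKappa}, since $\lambda_i=\tfrac12-\tfrac{1}{1+\kappa_i}$ lies in $[\tfrac12-\tfrac{1}{1+\kappa_0},\tfrac12)$. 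The boundary identification $\partial_\infty\phi(M)=\partial\Omega$ then follows from properness together with the formula $\psi=e^{\tilde\rho}(1,G)$ for the light cone map, as in \cite{bonini2015hypersurfaces}.

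For the reverse direction, I would apply Theorem \ref{Thm:LCT} to the rescaled metric $e^{2t}\hat g$ for $t$ large enough that $e^{-2t}\lambda_i<\tfrac12$, obtaining $\phi^t$ with Gauss map the identity, so injectivity is automatic. Because $P$ is bounded both above and below, the rescaled eigenvalues $\lambda_i^t=e^{-2t}\lambda_i$ are uniformly close to $0$ for $t$ large, so $\kappa_i^t$ is uniformly bounded away from $-1$, giving uniform weak horospherical convexity. Completeness of $\phi^t$ is equivalent to completeness of $\hat g$ via \eqref{Eq:LCmetric}.

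The main obstacle, and the precise place where Lemma \ref{Lem:Gradient} replaces Proposition 8.1 of \cite{chang2004class} to allow $n=2$, is showing that $\phi^t$ is properly immersed and that $\partial_\infty\phi^t(\Omega)=\partial\Omega$. Reading off the zeroth Minkowski coordinate of \eqref{Eq:rescale},
\begin{equation*}
2\,(\phi^t)_0 \;=\; e^{\rho+t}+e^{-(\rho+t)}+e^{-(\rho+t)}|\nabla^{g_{\mathbb{S}^n}}\rho|_{g_{\mathbb{S}^n}}^2,
\end{equation*}
I would argue $(\phi^t)_0(x)\to+\infty$ as $x\to x_0\in\partial\Omega$ by splitting into cases: if $\rho$ is unbounded along the approaching sequence, then one of the first two terms already blows up; otherwise $\rho$ is locally bounded, so $e^{-(\rho+t)}$ is bounded below, and Lemma \ref{Lem:Gradient} forces $|\nabla^{g_{\mathbb{S}^n}}\rho|^2\to\infty$, making the third term blow up. Properness follows at once, since a sequence $x_k$ with $\phi^t(x_k)$ in a fixed compact subset of $\mathbb{H}^{n+1}$ cannot accumulate on $\partial\Omega$. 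The same case analysis shows $(\phi^t)_0^{-1}e^{-(\rho+t)}\to 0$ and $(\phi^t)_0^{-1}e^{-(\rho+t)}|\nabla^{g_{\mathbb{S}^n}}\rho|_{g_{\mathbb{S}^n}}\to 0$, so $\phi^t(x)/(\phi^t)_0(x)\to(1,x_0)$ as $x\to x_0\in\partial\Omega$, which identifies $\partial_\infty\phi^t(\Omega)$ with $\partial\Omega$ and closes the proof.
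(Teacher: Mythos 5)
Your overall route is the same as the paper's: use the Local Correspondence Theorem \ref{Thm:LCT} for the pointwise facts (boundedness of $P$ from \eqref{Eq:LambdaKappa}, uniform weak horospherical convexity of $\phi^t$ from the two-sided bound on $P$ after rescaling, completeness via \eqref{Eq:LCmetric}), and use Lemma \ref{Lem:Gradient} to force $\beta=e^{2\rho}+|\nabla^{g_{\mathbb{S}^n}}\rho|^2_{g_{\mathbb{S}^n}}\to+\infty$ at $\partial\Omega$, from which properness and the boundary identification are read off from the explicit parametrization \eqref{Eq:rescale}. The paper packages this last step as Lemma \ref{Lem:CompleteProperBd} (quoted from \cite{bonini2015hypersurfaces} and proved there in the Poincar\'e ball model), whereas you rerun it by estimating the Minkowski height $(\phi^t)_0$ and the ratios $e^{-(\rho+t)}/(\phi^t)_0$ and $e^{-(\rho+t)}|\nabla^{g_{\mathbb{S}^n}}\rho|_{g_{\mathbb{S}^n}}/(\phi^t)_0$; that is a cosmetic difference, and your case analysis there is correct.

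The one place you are too quick is the boundary identification in the forward direction. You claim $\partial_\infty\phi(M)=\partial\Omega$ ``follows from properness together with the formula $\psi=e^{\tilde\rho}(1,G)$, as in \cite{bonini2015hypersurfaces}'' --- but this is exactly the step where \cite{bonini2015hypersurfaces} invoked Proposition 8.1 of \cite{chang2004class} (hence $n\ge3$), and properness alone does not give it. Writing $\phi=\phi^0$ over $\Omega$ via \eqref{Eq:Parametrization}, if along $x_k\to x_0\in\partial\Omega$ one had $\rho\to-\infty$ with $\nabla^{g_{\mathbb{S}^n}}\rho\to v$ bounded, the height still blows up (so properness is not violated), yet $\phi(x_k)/(\phi)_0(x_k)\to\bigl(1,\,x_0+\tfrac{2}{1+|v|^2}(v-x_0)\bigr)$, a point of $\mathbb{S}^n$ different from $x_0$ in general; so the asymptotic limit of $\phi$ need not coincide with the Gauss-map limit without further input. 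The fix is already inside your proposal: the induced metric is realizable (complete with bounded $P$, which you establish), so Lemma \ref{Lem:Gradient} applies to it, rules out this scenario, and then your reverse-direction ratio computation yields both inclusions $\partial_\infty\phi(M)\subseteq\partial\Omega$ and $\partial\Omega\subseteq\partial_\infty\phi(M)$ --- which is precisely how the paper argues, using Lemma \ref{Lem:CompleteProperBd} together with Lemma \ref{Lem:Gradient} in both directions. A minor overstatement elsewhere: uniform weak horospherical convexity gives only the lower bound $\kappa_i\ge\kappa_0>-1$, so \eqref{Eq:LCmetric} yields a one-sided comparison $g_h\ge c^2\,\phi^*g_{\mathbb{H}^{n+1}}$ rather than a two-sided quasi-isometry; this one-sided bound is all you need to transfer completeness from the induced metric to $\hat g$, but ``pinched between two positive constants'' is not justified by the hypotheses.
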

 
For higher dimension $n\geq3$, Lemma 3.2 and Corollary 3.1 in \cite{bonini2015hypersurfaces} are used to obtain the properness of the immersed hypersurfaces associated to a realizable metric as well as the consistency of the boundary at the infinity of the hypersurfaces with that of their hyperbolic Gauss map images. These results are independent of the specific dimension and can be stated together as follows.

\begin{lem}\label{Lem:CompleteProperBd}\cite{bonini2015hypersurfaces}
For $n\geq2$, suppose that $\hat{g}=e^{2\rho}g_{\mathbb{S}^n}$ is a complete conformal metric on a domain $\Omega\subset\mathbb{S}^n$ with $2$-tensor $P$ bounded from above. If 
\begin{equation}\label{eq3.1}
\beta(x):=e^{2\rho(x)}+|\nabla^{g_{\mathbb{S}^n}}\rho|_{g_{\mathbb{S}^n}}^2(x)\to+\infty \  \  \text{as} \  \ x\to\partial\Omega,
\end{equation}
then $\phi^t : \Omega\to\mathbb{H}^{n+1}$ given by \eqref{Eq:rescale} is a properly immersed, complete, weakly horospherically convex surface with $\partial_\infty\phi^t(\Omega)=\partial\Omega$ for all $t$ sufficiently large.
\end{lem}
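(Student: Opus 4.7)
The plan is to verify, for $t$ sufficiently large, four properties of $\phi^t$: weak horospherical convexity, completeness of the induced hyperbolic metric, properness, and the boundary identification $\partial_\infty \phi^t(\Omega) = \partial\Omega$. The first property is already handled by the discussion surrounding \eqref{Eq:rescale}: the assumption that $P$ is bounded above makes the rescaled eigenvalues satisfy $e^{-2t}\lambda_i < 1/2$ for $t$ large, so Theorem \ref{Thm:LCT} produces an immersion with $\kappa_i^t>-1$. The explicit formula $\kappa_i^t = 1/(1/2 - e^{-2t}\lambda_i) - 1$ shows moreover that $\kappa_i^t\to 1$ uniformly as $t\to\infty$, so $1+\kappa_i^t$ is uniformly bounded and bounded away from zero once $t$ is large. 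Combined with \eqref{Eq:LCmetric}, this implies the induced metric $(\phi^t)^*g_{\mathbb{H}^{n+1}}$ is bi-Lipschitz equivalent to the conformal metric $\hat{g}^t = e^{2t}\hat{g}$, which is complete because $\hat{g}$ is; completeness of the hypersurface follows.

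The core step is the estimate $(\phi^t)_0 \to +\infty$ as $x \to \partial\Omega$. Reading it off from \eqref{Eq:rescale},
\begin{equation*}
(\phi^t)_0 = \cosh(\rho+t) + \tfrac{1}{2} e^{-(\rho+t)} |\nabla^{g_{\mathbb{S}^n}}\rho|_{g_{\mathbb{S}^n}}^2 \geq \tfrac{1}{2} e^{-(\rho+t)} \bigl( e^{2(\rho+t)} + |\nabla^{g_{\mathbb{S}^n}}\rho|_{g_{\mathbb{S}^n}}^2 \bigr) \geq \tfrac{1}{2} e^{-(\rho+t)} \beta
\end{equation*}
for $t \geq 0$. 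If $(\phi^t)_0(x_k)$ stayed bounded along some sequence $x_k \to x_0 \in \partial\Omega$, then boundedness of $\cosh(\rho+t)(x_k)$ would force $\rho(x_k)$ bounded in absolute value, and then boundedness of the second term would force $|\nabla^{g_{\mathbb{S}^n}}\rho|^2(x_k)$ bounded, contradicting $\beta(x_k)\to+\infty$.

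Properness is then immediate: any compact $K \subset \mathbb{H}^{n+1}$ has bounded zeroth coordinate, so no subsequence of $(\phi^t)^{-1}(K)$ can approach $\partial\Omega$, and since $\bar\Omega \subset \mathbb{S}^n$ is compact the preimage is relatively compact in $\Omega$. For the boundary identification, I would compute the asymptotic direction directly from \eqref{Eq:rescale}:
\begin{equation*}
\frac{\phi^t(x)}{(\phi^t)_0(x)} = \left( 1,\ x - \frac{e^{-(\rho+t)}}{(\phi^t)_0}\bigl( x - \nabla^{g_{\mathbb{S}^n}}\rho \bigr) \right).
\end{equation*}
The estimate $(\phi^t)_0 \geq \tfrac{1}{2} e^{-(\rho+t)} \beta$ together with the trivial bound $|\nabla^{g_{\mathbb{S}^n}}\rho| \leq \sqrt{\beta}$ gives a correction of size at most $2(1 + |\nabla^{g_{\mathbb{S}^n}}\rho|)/\beta \leq 2/\beta + 2/\sqrt{\beta}$, which vanishes as $x\to\partial\Omega$. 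Therefore $\phi^t(x)/(\phi^t)_0(x) \to (1, x_0)$, identifying $\partial_\infty \phi^t(\Omega) = \partial\Omega$.

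The main obstacle is pairing the two estimates $(\phi^t)_0 \gtrsim e^{-(\rho+t)}\beta$ and $|\nabla^{g_{\mathbb{S}^n}}\rho| \leq \sqrt{\beta}$ to extract, from the single scalar assumption $\beta\to+\infty$, both divergence of $(\phi^t)_0$ and decay of the correction to the asymptotic direction. Without such uniform control, the failure mode in which $\rho$ stays bounded but $|\nabla^{g_{\mathbb{S}^n}}\rho|$ becomes enormous could a priori corrupt the asymptotic direction; the factor $\sqrt{\beta}$ in the denominator beats the linear growth of $|\nabla^{g_{\mathbb{S}^n}}\rho|$ and lets the argument through.
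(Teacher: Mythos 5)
Your proposal is correct in substance, and for the properness and boundary identification it is essentially the paper's own argument transplanted from the Poincar\'e ball model to the hyperboloid model: the paper applies the map $\tau(\bar x)=\frac{1}{1+x_0}(x_1,\dots,x_{n+1})$ and writes $\tau\circ\phi^t$ as a scalar coefficient tending to $1$ times $x+Y(x)$ with correction $Y=\frac{2\nabla^{g_{\mathbb{S}^n}}\rho}{e^{2\rho}+|\nabla^{g_{\mathbb{S}^n}}\rho|_{g_{\mathbb{S}^n}}^2-1}$, whereas you divide by the height $(\phi^t)_0$; in both versions the decisive mechanism is exactly the one you isolate at the end, namely that $e^{\rho}$ and $|\nabla^{g_{\mathbb{S}^n}}\rho|_{g_{\mathbb{S}^n}}$ are at most $\sqrt{\beta}$ while the denominator grows like $\beta$, so the correction decays and the image point tends to $(1,x_0)$. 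Your one-line deduction of $\partial_\infty\phi^t(\Omega)=\partial\Omega$ compresses the reverse inclusion, but it is covered by your properness step exactly as in the paper: a sequence whose image escapes to infinity has $(\phi^t)_0\to\infty$, hence (by properness) approaches $\partial\Omega$ along a subsequence, and your direction estimate then identifies its ideal limit with the corresponding boundary point of $\Omega$.

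Where you genuinely diverge is completeness. The paper proves it a posteriori from properness: a Cauchy sequence for the induced metric has image in a compact set, hence preimage in a compact subset of $\Omega$ on which the induced metric and $\hat g$ are comparable, and completeness of $\hat g$ supplies the limit. You instead compare metrics globally through \eqref{Eq:LCmetric}, which is cleaner, but be careful with the hypotheses: the lemma assumes $P$ bounded only from above, so the eigenvalues $\lambda_i$ may be unbounded below, $1+\kappa_i^t=(\tfrac12-e^{-2t}\lambda_i)^{-1}$ need not be bounded away from $0$, and neither the uniform convergence $\kappa_i^t\to1$ nor the bi-Lipschitz equivalence you assert is available (those require $P$ bounded on both sides, as for realizable metrics). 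What is available, and is all you need, is the one-sided bound: for $t$ large, $e^{-2t}\lambda_i\le\tfrac14$ gives $1+\kappa_i^t\le4$, so by \eqref{Eq:LCmetric} the induced metric dominates a fixed multiple of the complete metric $g_h^t=e^{2t}\hat g$; every divergent curve therefore has infinite length and the induced metric is complete. With that correction your argument is valid as a proof of the lemma as stated.
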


Due to Lemma \ref{Lem:CompleteProperBd} the only issue left in completing the global correspondence is to determine when $\beta(x)$ goes to $+\infty$ as $x$ approaches the boundary $\partial\Omega$. In \cite{bonini2015hypersurfaces}, the authors used Proposition 8.1 in \cite{chang2004class}, which showed that the conformal factor $\rho(x)$ goes to $+\infty$ as $x$ approaches the boundary $\partial\Omega$ when the scalar curvature of a complete conformal  metric $\hat{g}=e^{2\rho}g_{\mathbb{S}^n}$ is bounded from below. The proof of Proposition 8.1 in \cite{chang2004class} relies on the Moser iteration and fails to hold in dimension $n=2$. 

In the remainder of this section, we establish a new key lemma about the asymptotic behavior of $\beta(x)$ in \eqref{eq3.1} from which the Global Correspondence Theorem \ref{Thm:GCT2} follows immediately from Lemma \ref{Lem:CompleteProperBd}. We would like to point out that unlike Proposition 8.1 in \cite{chang2004class}, the following lemma relies on both the upper and lower bounds of the associated conformal $2$-tensor $P$ but its use allows us to treat the global correspondence in all dimensions $n\geq2$ in a unified way. 

\begin{lem}\label{Lem:Gradient2}
For $n\geq 2$, suppose that $\hat{g}=e^{2\rho}g_{\mathbb{S}^n}$ is a realizable metric on a domain $\Omega\subset\mathbb{S}^n$. Then 
\begin{equation}
e^{2\rho(x)}+|\nabla^{g_{\mathbb{S}^n}}\rho|_{g_{\mathbb{S}^n}}^2(x)\to+\infty \  \  \text{as} \  \ x\to x_0\in\partial\Omega.
\end{equation}
\end{lem}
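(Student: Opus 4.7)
The strategy is to argue by contradiction: assuming there exist $x_k\to x_0\in\partial\Omega$ with $\beta(x_k)\leq C$, I will deduce that a $g_{\mathbb{S}^n}$-ball of uniform radius $s_0>0$ around each $x_k$ lies in $\Omega$, forcing $x_0\in\Omega$. Rearranging the defining equation of $P$ gives
\[
\nabla^2_{g_{\mathbb{S}^n}}\rho \;=\; -P + d\rho\otimes d\rho - \tfrac{1}{2}\bigl(|\nabla^{g_{\mathbb{S}^n}}\rho|_{g_{\mathbb{S}^n}}^2-1\bigr)g_{\mathbb{S}^n},
\]
so boundedness of $P$ yields a pointwise bound $|\nabla^2_{g_{\mathbb{S}^n}}\rho|\leq A(1+|\nabla^{g_{\mathbb{S}^n}}\rho|_{g_{\mathbb{S}^n}}^2)$ with $A$ depending only on $\|P\|_\infty$ and $n$.

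Fix any unit vector $\xi\in T_{x_k}\mathbb{S}^n$ and let $\gamma$ be the unit-speed geodesic of $(\mathbb{S}^n,g_{\mathbb{S}^n})$ starting at $x_k$ in the direction $\xi$; let $[0,s^*)$ denote the maximal initial interval along which $\gamma$ stays in $\Omega$. Set $h(s)=|\nabla^{g_{\mathbb{S}^n}}\rho|_{g_{\mathbb{S}^n}}^2(\gamma(s))$ on this interval, and compute $h'=2\,\nabla^2_{g_{\mathbb{S}^n}}\rho(\gamma',\nabla^{g_{\mathbb{S}^n}}\rho)$ using the formula for $P$; the algebraic identity is
\[
h'(s) \;=\; -2P(\gamma',\nabla^{g_{\mathbb{S}^n}}\rho) + d\rho(\gamma')\bigl(h(s)+1\bigr),
\]
which, since $|\gamma'|=1$, produces an ODI of the form $|h'(s)|\leq B\sqrt{h(s)}\,(1+h(s))$ with $B$ depending only on $\|P\|_\infty$. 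Because $h(0)\leq\beta(x_k)\leq C$, substituting $w=\sqrt{h}$ gives $|w'|\leq \tfrac{B}{2}(1+w^2)$, which integrates to $\arctan\sqrt{h(s)}\leq\arctan\sqrt{C}+Bs/2$; this keeps $h$ bounded by some $C_1$ on $[0,s^*\wedge s_0]$ with $s_0:=(\pi/2-\arctan\sqrt{C})/B$ independent of $k$ and $\xi$. Integrating $|(\rho\circ\gamma)'|\leq\sqrt{h}$ and using $\rho(x_k)\leq\tfrac{1}{2}\log C$ yields a uniform upper bound $\rho\circ\gamma\leq M$ on the same interval.

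The final step is to exclude $s^*\leq s_0$ via completeness. If $s^*\leq s_0$, then the $\hat{g}$-length $\int_0^{s^*}e^{\rho(\gamma(s))}\,ds\leq e^M s^*$ is finite, so $\{\gamma(s_j)\}$ is $\hat{g}$-Cauchy in $\Omega$ for any $s_j\nearrow s^*$; completeness produces a limit $q\in\Omega$, and since $\hat{g}$ and $g_{\mathbb{S}^n}$ are mutually comparable on a $g_{\mathbb{S}^n}$-neighborhood of $q$ in $\Omega$, this limit must coincide with the $g_{\mathbb{S}^n}$-limit $\gamma(s^*)$; but $\Omega$ being open makes $\gamma(s^*)\in\partial\Omega$, contradicting $q\in\Omega$. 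Hence $\gamma([0,s_0])\subset\Omega$ for every $\xi$, so $B_{g_{\mathbb{S}^n}}(x_k,s_0)\subset\Omega$; taking $k$ large enough that $d_{g_{\mathbb{S}^n}}(x_k,x_0)<s_0$ forces $x_0\in\Omega$, contradicting $x_0\in\partial\Omega$. I anticipate the main subtlety to be this last completeness step, specifically reconciling the $\hat{g}$-Cauchy limit with the $g_{\mathbb{S}^n}$-limit $\gamma(s^*)$; the ODE comparison is routine once $h'$ is written out.
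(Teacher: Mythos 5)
There is a genuine gap at the very first step, and it propagates through your ODE comparison. In this paper ``bounded $2$-tensor $P$'' (the realizability condition) means bounded with respect to the horospherical metric $\hat g=e^{2\rho}g_{\mathbb{S}^n}$: boundedness of $P$ is exactly what corresponds to uniform weak horospherical convexity via the eigenvalue relation $\lambda_i=\tfrac12-\tfrac{1}{1+\kappa_i}$, and the paper's proof explicitly writes $|P|_{\hat g}\le C_0$. Measured against the round metric this only gives $|P|_{g_{\mathbb{S}^n}}\le C_0e^{2\rho}$, so the correct Hessian bound is
\begin{equation*}
|\nabla^2_{g_{\mathbb{S}^n}}\rho|_{g_{\mathbb{S}^n}}\le K\bigl(C_0e^{2\rho}+|\nabla^{g_{\mathbb{S}^n}}\rho|^2_{g_{\mathbb{S}^n}}+1\bigr),
\end{equation*}
not $A(1+|\nabla\rho|^2)$, and likewise $|P(\gamma',\nabla\rho)|\le C_0e^{2\rho}\sqrt{h}$ rather than $B\sqrt{h}$. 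Your ODI $|h'|\le B\sqrt h(1+h)$ therefore uses a hypothesis you do not have: the term $e^{2\rho}$ is only controlled at the base points $x_k$ (where $\beta(x_k)\le C$), while along the geodesic you bound $\rho$ only \emph{after} the gradient bound is in place. As written the argument is circular; this is precisely the difficulty the paper's proof addresses with a bootstrap, choosing $A$ so that $C_0Ce^{2K\delta\bar Y}+1\le A$ and running a maximum-point argument on $|\nabla\rho|$ in the ball $B_{\tilde\delta}(x_i)$.

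The gap is fixable without abandoning your structure: either reproduce that bootstrap, or (cleaner) apply your comparison directly to the combined quantity $\beta(s)=e^{2\rho}+h$ along the geodesic, for which $|\beta'|\le 2e^{2\rho}\sqrt h+2C_0e^{2\rho}\sqrt h+\sqrt h\,(h+1)\le c(C_0,n)\,(1+\beta)^{3/2}$, giving a uniform bound for $\beta$ (hence for $h$ and for $\rho$ from above) on a uniform interval $[0,s_0]$ with $s_0$ depending only on $C$, $C_0$, $n$. Your endgame is fine and in fact differs slightly from the paper's: you show every round ball $B_{g_{\mathbb{S}^n}}(x_k,s_0)$ lies in $\Omega$ (forcing $x_0\in\Omega$), using completeness of $\hat g$ and the comparability of $\hat g$ with $g_{\mathbb{S}^n}$ near an interior limit point, whereas the paper bounds $\rho$ on $B_{\delta'}(x_0)\cap\Omega$ and contradicts completeness via a finite-length curve asymptotic to $x_0$; both conclusions are legitimate once the uniform bound on the gradient (or on $\beta$) is correctly established.
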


\begin{proof}
For sake of contradiction, assume to the contrary that $\{x_i\} \subset \Omega$ is a sequence of interior points such that
\begin{equation}
x_i\to x_0\in\partial\Omega \  \  \text{as} \  \  i\to+\infty
\end{equation}
and
\begin{equation}\label{Eq:BetaBd}
\beta(x_i) = e^{2\rho(x_i)}+|\nabla^{g_{\mathbb{S}^n}}\rho|_{g_{\mathbb{S}^n}}^2(x_i)\leq C, \  \forall i,
\end{equation}
for some positive constant $C$. Since the $2$-tensor $P=-\nabla_{g_{\mathbb{S}^n}}^2\rho+d\rho\otimes d\rho-\frac12(|\nabla^{g_{\mathbb{S}^n}}\rho|_{g_{\mathbb{S}^n}}^2-1)g_{\mathbb{S}^n}$ is bounded, we find
 \begin{equation}
 |\nabla_{g_{\mathbb{S}^n}}^2\rho|_{g_{\mathbb{S}^n}}\leq K(C_0e^{2\rho}+|\nabla^{g_{\mathbb{S}^n}}\rho|_{g_{\mathbb{S}^n}}^2(x)+1),
 \end{equation}
where $|P|_{\hat g} \leq C_0$ and $K=\max\{1, \frac{\sqrt{n}}{2}\}$. Then by Kato's inequality we have
\begin{equation}\label{Eq:EvolutionIneq}
\frac{\partial}{\partial t}|\nabla^{g_{\mathbb{S}^n}}\rho|_{g_{\mathbb{S}^n}}(x)\leq|\frac{\partial}{\partial t}|\nabla^{g_{\mathbb{S}^n}}\rho|_{g_{\mathbb{S}^n}}|(x)\leq  
|\nabla_{g_{\mathbb{S}^n}}^2\rho|_{g_{\mathbb{S}^n}}(x)\leq K(C_0e^{2\rho}+|\nabla^{g_{\mathbb{S}^n}}\rho|_{g_{\mathbb{S}^n}}^2+1),
\end{equation}
where $\frac{\partial}{\partial t}$ denotes an arbitrary unit tangent vector at $x\in\mathbb{S}^n$.

Now consider $|\nabla^{g_{\mathbb{S}^n}}\rho|_{g_{\mathbb{S}^n}}$ as a function of $t$ along the geodesic starting from $x_i$ in the direction of $\frac{\partial}{\partial t}$. In order to contradict the completeness of the conformal metric $\hat{g}=e^{2\rho}g_{\mathbb{S}^n}$, we aim to derive a uniform bound for $\rho$ in a neighborhood of $x_0$ near the infinity. To do so, we begin by deriving a uniform estimate for $|\nabla^{g_{\mathbb{S}^n}}\rho|_{g_{\mathbb{S}^n}}$ in geodesic balls about $x_i$ with a uniform radius independent of $x_i$ by comparison of the evolution of $|\nabla^{g_{\mathbb{S}^n}}\rho|_{g_{\mathbb{S}^n}}$ with following ODE along geodesics:
\begin{equation}\label{Eq:ODE}
\begin{cases}
Y'=\frac{dY}{dt}=Y^2+A\\
Y(0)=|\nabla^{g_{\mathbb{S}^n}}\rho|_{g_{\mathbb{S}^n}}(x_i)=y_i\leq C,
\end{cases}
\end{equation}
where $A$ is a positive constant to be determined. The solution to \eqref{Eq:ODE} is 
\begin{equation}
\begin{split}
Y&=\sqrt{A}\tan(\sqrt{A}t+\arctan(\frac{1}{\sqrt{A}}y_i))\\
&\leq\sqrt{A}\tan(\sqrt{A}t+\arctan(\frac{1}{\sqrt{A}}C)).
\end{split}
\end{equation} 

Obviously, $\arctan(\frac{1}{\sqrt{A}}C)<\frac\pi2$. Let $\delta=\frac{1}{2\sqrt{A}}(\frac\pi2-\arctan(\frac{1}{\sqrt{A}}C)) > 0$. Then $Y$ is strictly increasing on $[0,\delta)$ and 
\begin{equation}
Y(t)\leq\sqrt{A}\tan(\frac\pi4+\frac12\arctan(\frac{1}{\sqrt{A}}C))=:\bar{Y}, \ \forall t\in[0,\delta).
\end{equation}
Let $A$ be a positive constant such that 
\begin{equation}
2K\delta\bar{Y}=K(\frac\pi2-\arctan(\frac{1}{\sqrt{A}}C)\tan(\frac\pi4+\frac12\arctan(\frac{1}{\sqrt{A}}C))\leq\ln(\frac{A-1}{C_0C}).
\end{equation}
For such fixed $A$, we have $\delta$ and $\bar{Y}$ depending only on $C,C_0$ with
\begin{equation}
C_0Ce^{2K\delta\bar{Y}}+1 \leq A.
\end{equation}

We claim that 
\begin{equation}\label{Eq:UnifGradBd}
|\nabla^{g_{\mathbb{S}^n}}\rho|_{g_{\mathbb{S}^n}}(x) \leq K\bar{Y}, \ \forall x \in B_{\delta}(x_i) \cap \Omega, \ \forall i,
\end{equation}
which is the desired uniform bound on the gradient of the conformal factor in geodesic balls about $x_i$ with uniform radius $\delta$ independent of $x_i$. Otherwise, there exists $0< \tilde{\delta} < \delta$ and some $x_i$ such that
\begin{equation}\label{Eq:IntMax}
\max_{B_{\tilde{\delta}}(x_i)} |\nabla^{g_{\mathbb{S}^n}}\rho|_{g_{\mathbb{S}^n}} =K\bar{Y}.
\end{equation}
But then by the mean value theorem it follows
\begin{equation}
C_0 e^{2\rho(x)} +1 \leq C_0 e^{2(\rho(x_i) + \delta K\bar{Y})} +1\leq C_0C e^{2K\delta \bar{Y}} +1\leq A, \ \forall x \in B_{\tilde{\delta}}(x_i) \cap \Omega,
\end{equation}
and therefore from inequality \eqref{Eq:EvolutionIneq} we find
\begin{equation}\label{Eq:EvolutionIneq2}
\frac{\partial}{\partial t}|\nabla^{g_{\mathbb{S}^n}}\rho|_{g_{\mathbb{S}^n}}(x) \leq K(C_0e^{2\rho(x)}+|\nabla^{g_{\mathbb{S}^n}}\rho|_{g_{\mathbb{S}^n}}^2(x)+1) \leq K(|\nabla^{g_{\mathbb{S}^n}}\rho|_{g_{\mathbb{S}^n}}^2(x) + A).
\end{equation}
Given $x \in B_{\tilde{\delta}}(x_i) \cap \Omega$, let $\alpha$ denote the unit speed geodesic from $x_i$ to $x$ with $x=\alpha(t)$ for some $t \in [0,\tilde{\delta}]$. Comparing with the solution $Y$ to \eqref{Eq:ODE}, it follows from \eqref{Eq:EvolutionIneq2} that
\begin{equation}
|\nabla^{g_{\mathbb{S}^n}}\rho|_{g_{\mathbb{S}^n}}(x) \leq KY(t).
\end{equation}
But $Y$ is strictly increasing on $[0, \delta)$ and $0 < \tilde{\delta} < \delta$ so
\begin{equation}
|\nabla^{g_{\mathbb{S}^n}}\rho|_{g_{\mathbb{S}^n}}(x) < K\bar{Y}, \ \forall x \in  B_{\tilde{\delta}}(x_i) \cap \Omega,
\end{equation} 
contradicting the assumption \eqref{Eq:IntMax} that the maximum value $K\bar{Y}$ was achieved in $B_{\tilde{\delta}}(x_i) \cap \Omega$. Hence, there is no such $\tilde{\delta}$ and our uniform bound \eqref{Eq:UnifGradBd} follows.
 
Now fix $0< \delta' < \frac{1}{2} \delta$. By assumption $x_i\to x_0\in\partial\Omega$ so there is an $i_0$ such that $x_i\in B_{\delta'}(x_0)\cap\Omega$ for all $i\geq i_0$. Then for any $x\in B_{\delta'}(x_0)\cap\Omega$, we have
\begin{equation}\label{Eq:TriangleIneq}
{\rm dist}_{\mathbb{S}^n}(x,x_i)\leq {\rm dist}_{\mathbb{S}^n}(x,x_0)+ {\rm dist}_{\mathbb{S}^n}(x_i,x_0)\leq 2\delta' < \delta, \ \forall i \geq i_0,
\end{equation}
so that 
\begin{equation}
x\in B_{\delta}(x_i)\cap\Omega, \ \forall  i\geq i_0.
\end{equation}
Together with \eqref{Eq:UnifGradBd}, we have
\begin{equation}
|\nabla^{g_{\mathbb{S}^n}}\rho|_{g_{\mathbb{S}^n}}(x)\leq K\bar{Y}, \ \forall x\in B_{\delta'}(x_0)\cap\Omega,
\end{equation}
and therefore
\begin{equation}\label{Eq:MVT}
|\rho(x)|\leq|\rho(x_i)|+K\delta\bar{Y}, \ \forall x\in B_{\delta'}(x_0)\cap\Omega, \ \forall i\geq i_0.
\end{equation}
by the mean value theorem.
For any $j\geq i_0$, $x=x_j$ satisfies \eqref{Eq:MVT} so $|\rho(x_i)|$ are uniformly bounded for $i\geq i_0$. Then we take the infimum to find
\begin{equation}
|\rho(x)|\leq\inf_{i\geq i_0}|\rho(x_i)|+K\delta\bar{Y}, \ \forall x\in B_{\delta'}(x_0)\cap\Omega,
\end{equation}
which is a uniform bound for $\rho$ in $B_{\delta'}(x_0)\cap\Omega$. But then any curve asymptotic to $x_0 \in \partial \Omega$ would have finite length with respect to the conformal metric $\hat{g}=e^{2\rho}g_{\mathbb{S}^n}$, which contradicts its completeness and finishes the proof. 
\end{proof}
     
% ----------------------------------------------------------------
% ----------------------------------------------------------------
\begin{comment}
  
Having such new lemma, we can say more about the regularity of the hyperbolic Gauss map.
\begin{defn}
Suppose that $\phi : M^n\to\mathbb{H}^{n+1}$ is a properly immersed hypersurface (or surface for $n=2$). The hyperbolic Gauss map is said to be regular at infinity if, for each $p\in\partial_\infty\phi(M)\subset\mathbb{S}^n$,
\begin{equation}
\lim_{i\to\infty}G(q_i)=p\  \  \text{for} \  \ q_i\in M^n, \phi(q_i)\to p.
\end{equation}
\end{defn}
Directly from the Lemma \ref{Lem:CompleteProperBd} and Lemma \ref{Lem:Gradient2}, we have following corollary.
\begin{cor}
The hyperbolic Gauss map of any admissible hypersurface (or surface for $n=2$) is regular.
\end{cor}

\begin{proof}[Proof of Theorem \ref{Thm:GCT2}]
The second part directly follows from Lemma \ref{Lem:CompleteProperBd} and Lemma \ref{Lem:Gradient2}. As for the first part, given an admissible surface, we have a corresponding realizable metric from the Local Correspondence Theorem. Again by Lemma \ref{Lem:CompleteProperBd} and Lemma \ref{Lem:Gradient2}, we have $\partial_\infty\phi=\partial\Omega$. Then we are done.
\end{proof}

\end{comment}
% ----------------------------------------------------------------
% ----------------------------------------------------------------

% ----------------------------------------------------------------

\section{Normal Flow, Embeddedness and the Bernstein Theorem}\label{Sect:FlowEmb}
An important issue in the theory of hypersurfaces is to know when an immersed hypersurface is in fact embedded. From the approach taken in \cite{bonini2010correspondences} where hypersurfaces were constructed via the normal flow of a subdomain of the infinity of hyperbolic space, it is naturally expected that immersed hypersurfaces are embedded when the normal flow is regular. It turns out that one can use convexity arguments to show that an admissible hypersurface can be unfolded along the normal flow when the structure of the boundary at infinity is relatively simple \cite{bonini2015hypersurfaces}. This makes the Global Correspondence Theorem \ref{Thm:GCT} more useful.

Before stating the main result we recall that the geodesic normal flow $\{\phi^t\}_{t \in \mathbb{R}}$ in $\mathbb{H}^{n+1}$ of an admissible hypersurface $\phi: \Omega \subset \mathbb{S}^n \to \mathbb{H}^{n+1}$ is given by
\begin{equation}\label{Eq:NormalFlow}
\phi^t(x) := \exp_{\phi(x)}(-t\eta(x)) = \phi(x) \cosh{t} - \eta(x)\sinh{t}:\Omega \to \mathbb{H}^{n+1} \subset \mathbb{R}^{1,n+1}
\end{equation}\label{Eq:Riccatti}
and the principal curvatures $\kappa_i^t$ of $\phi^t$ are given by
\begin{equation}\label{Eq:FlowPC}
\kappa_i^t = \frac{\kappa_i + \tanh{t}}{1+\kappa_i \tanh{t}}
\end{equation}
due to the Riccatti equations. Moreover, it is easily seen that the hyperbolic Gauss map $G^t$ is invariant under the normal flow and that the horospherical metric $g^t_h$ of $\phi^t$ is given by $g^t_h=e^{2t}g_h$ where $g_h$ is the horospherical metric of $\phi$.

With the two dimensional correspondence in hand, in particular, the consistency of boundaries, the following embeddedness theorem follows immmediately for all dimensions $n\geq2$ from Theorem 3.6 of \cite{bonini2015hypersurfaces}.

\begin{thm}\label{Thm:EmbFlow2}[Embeddedness Theorem]
For $n \geq 2$, suppose that $\phi : M^n\to\mathbb{H}^{n+1}$ is an admissible hypersurface. In addition, assume that the boundary at infinity $\partial_{\infty}\phi(M)$ is a disjoint, finite union of smooth compact embedded submanifolds with no boundary in $\mathbb{S}^{n}$. Then $\phi$ can be unfolded into an embedded hypersurface along its geodesic normal flow eventually.

Equivalently, suppose that $\hat{g}=e^{2\rho}g_{\mathbb{S}^{n}}$ is a realizable metric on a domain $\Omega \subset \mathbb{S}^{n}$.  In addition, assume that the boundary $\partial\Omega$ is a disjoint, finite union of smooth compact embedded submanifolds with no boundary in $\mathbb{S}^{n}$. Then the admissible hypersurfaces
\begin{equation}\label{Eq:NormalFlow2}
\phi^t=\frac{e^{\rho+t}}{2}(1+e^{-2(\rho+t)}(1+|\nabla\rho|^2))(1,x)+e^{-(\rho+t)}(0,-x+\nabla\rho) : \Omega\to\mathbb{H}^{n+1}
\end{equation}
are embedded for $t$ sufficiently large.
\end{thm}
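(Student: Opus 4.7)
The plan is to apply the Global Correspondence Theorem \ref{Thm:GCT2} just established and then invoke the dimension-independent portion of the convexity argument from Theorem 3.6 of \cite{bonini2015hypersurfaces}. First, I would pass to the equivalent metric formulation: given an admissible hypersurface $\phi$ whose asymptotic boundary is a disjoint finite union of smooth compact embedded submanifolds without boundary, Theorem \ref{Thm:GCT2} produces a realizable metric $\hat{g} = e^{2\rho} g_{\mathbb{S}^n}$ on $\Omega = G(M)$ with $\partial\Omega = \partial_\infty \phi(M)$. Hence it suffices to show that the family \eqref{Eq:NormalFlow2} arising from a realizable metric with such a well-structured $\partial\Omega$ is embedded for $t$ sufficiently large.

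The geometric engine is the Riccatti formula \eqref{Eq:FlowPC}. Uniform weak horospherical convexity, $\kappa_i \geq \kappa_0 > -1$, forces $\kappa_i^t \to 1$ as $t \to +\infty$ uniformly on $M$; equivalently, the rescaled eigenvalues $\lambda_i^t = e^{-2t}\lambda_i$ of the associated $2$-tensor $P_t$ decay to zero. Thus for $t$ large, $\phi^t$ is strongly horospherically convex, with each tangent horosphere locally supporting $\phi^t$ on one side, and the rescaled horospherical metric $e^{2t}\hat{g}$ is increasingly close (after normalization) to the round metric on $\Omega$. These are the standing hypotheses under which the argument of \cite[Thm.~3.6]{bonini2015hypersurfaces} operates.

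The embeddedness verification would then proceed in two stages. Near each boundary component $\Sigma_j$, the smoothness and compactness give a tubular neighborhood $U_j \subset \mathbb{S}^n$ with a well-defined nearest-point projection onto $\Sigma_j$; in $U_j \cap \Omega$ the strong convexity of $\phi^t$ together with the tangent-supporting-horosphere property allows a barrier argument to show that $\phi^t|_{U_j \cap \Omega}$ is injective and its image avoids all other tubes $U_k$ for $k \neq j$, provided $t$ is large enough (here we crucially use that $\partial_\infty \phi^t = \partial\Omega$, which is the output of Lemma \ref{Lem:Gradient2}). On the complement $\Omega \setminus \bigcup_j U_j$, the consistency of boundaries confines $\phi^t$ to a compact region of $\mathbb{H}^{n+1}$, and the injectivity of the hyperbolic Gauss map together with $\lambda_i^t \to 0$ yield injectivity of $\phi^t$ there. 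Finiteness and disjointness of the components let one choose a single $t$ making all the local pictures globally compatible.

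The main obstacle has in fact already been dispatched in the previous section. The only reason the analogous theorem in \cite{bonini2015hypersurfaces} was restricted to $n \geq 3$ was that the Global Correspondence Theorem, and in particular the consistency $\partial_\infty \phi^t = \partial\Omega$, was unavailable for $n = 2$; this in turn was traceable to the failure of the Moser-iteration-based Proposition~8.1 of \cite{chang2004class} at the borderline dimension. Lemma \ref{Lem:Gradient2} now supplies the required asymptotic control $e^{2\rho} + |\nabla^{g_{\mathbb{S}^n}}\rho|^2 \to +\infty$ at $\partial\Omega$ for every $n \geq 2$ using only the boundedness of $P$ and completeness, so the tubular-neighborhood and convexity arguments of \cite{bonini2015hypersurfaces} apply verbatim without any new analytic input.
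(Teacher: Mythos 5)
Your proposal is correct and follows essentially the same route as the paper: the paper likewise observes that the convexity/tubular-neighborhood argument of Theorem 3.6 in \cite{bonini2015hypersurfaces} is dimension-independent, so once the Global Correspondence Theorem \ref{Thm:GCT2} (in particular the boundary consistency $\partial_\infty\phi^t(\Omega)=\partial\Omega$ supplied by Lemma \ref{Lem:Gradient2} in place of Proposition 8.1 of \cite{chang2004class}) is available for all $n\geq 2$, the embeddedness statement follows immediately. Your additional sketch of the barrier and tubular-neighborhood steps is consistent with the cited argument, though the incidental remark that $e^{2t}\hat{g}$ becomes close to the round metric is not needed and not quite accurate; the essential point, which you identify correctly, is only that $\kappa_i^t\to 1$ uniformly and that the asymptotic boundaries agree.
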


Next note from \eqref{Eq:LCmetric} that the weak horospherical convexity of an immersed hypersurface $\phi:M^n \to \mathbb{H}^{n+1}$ guarantees the local injectivity of the hyperbolic Gauss map, which in turn is sufficient to start the geodesic normal flow. In particular, when $\partial_{\infty}\phi(M) = \{x\} \subset \mathbb{S}^n$ is a single point, this observation allows us to remove the injectivity of the hyperbolic Gauss map $G$ assumptions from one direction of the Global Correspondence Theorem \ref{Thm:GCT2} and the Embeddededness Theorem \ref{Thm:EmbFlow2}.

\begin{thm}\label{Thm:CorrSinglePtBd2}
For $n\geq 2$, suppose that $\phi : M^n\to\mathbb{H}^{n+1}$ is an immersed, complete, uniformly weakly horospherically convex hypersurface with the boundary at infinity $\partial_{\infty}\phi(M)=\{x\} \subset \mathbb{S}^n$ a single point. Then it induces a complete conformal metric $\hat{g}=e^{2\rho}g_{\mathbb{S}^{n}}$ on the image of the hyperbolic Gauss map $G(M)\subset\mathbb{S}^{n}$ with bounded $2$-tensor $P$ defined as in \eqref{Eq:2TensorP} where $\rho$ is the horospherical support function of $\phi$ and
\begin{equation}\label{Eq:Boundary}
\partial_{\infty}\phi(M)=\partial G(M).
\end{equation}
Moreover, $\phi$ can be unfolded into an embedded hypersurface along its geodesic normal flow eventually.
\end{thm}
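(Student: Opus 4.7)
The plan is to reduce the single-point-at-infinity situation to the hypotheses already handled by the Global Correspondence Theorem \ref{Thm:GCT2} and the Embeddedness Theorem \ref{Thm:EmbFlow2}, the key point being to recover global injectivity of the hyperbolic Gauss map from the asymptotic data alone. I would first exploit the geodesic normal flow $\{\phi^t\}_{t \in \mathbb{R}}$ defined by \eqref{Eq:NormalFlow}. Since $G^t = G$ is invariant under the flow and $\partial_\infty \phi^t(M) = \partial_\infty\phi(M) = \{x\}$ is preserved, the evolution of the principal curvatures \eqref{Eq:FlowPC} together with the uniform bound $\kappa_i \geq \kappa_0 > -1$ implies $\kappa_i^t \to 1$ uniformly as $t \to \infty$. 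Thus, for some $t_0$ large, $\phi^{t_0}: M^n \to \mathbb{H}^{n+1}$ is a complete, immersed, strictly locally convex hypersurface with asymptotic boundary equal to the single point $\{x\}$.

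The second step is to promote this strict local convexity plus single-point asymptotic boundary into global injectivity of the Gauss map. This is exactly the content of Proposition 1.3 and Corollary 4.4 of \cite{epstein1987asymptotic}: a complete strictly convex hypersurface in $\mathbb{H}^{n+1}$ with a single point at infinity is embedded, proper, and has injective hyperbolic Gauss map onto an open subset of $\mathbb{S}^n \setminus \{x\}$. Applying this to $\phi^{t_0}$ yields that $G^{t_0} = G: M^n \to \mathbb{S}^n \setminus \{x\}$ is injective; properness of $\phi^{t_0}$, and hence of $\phi$ itself (since the normal flow for a fixed finite time is a diffeomorphism preserving properness), follows from the same reference together with the completeness hypothesis.

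With global injectivity of $G$ and properness in hand, $\phi$ meets the full definition of an admissible hypersurface, so the forward direction of the Global Correspondence Theorem \ref{Thm:GCT2} applies verbatim and produces the complete conformal horospherical metric $\hat g = e^{2\rho} g_{\mathbb{S}^n}$ on $\Omega = G(M) \subset \mathbb{S}^n$ with bounded $2$-tensor $P$ as in \eqref{Eq:2TensorP}, and with $\partial_\infty \phi(M) = \partial G(M)$. For the final assertion about eventual embeddedness under the normal flow, note that $\partial_\infty \phi(M) = \{x\}$ is trivially a disjoint finite union of smooth compact embedded submanifolds without boundary in $\mathbb{S}^n$ (a single point is a closed $0$-dimensional submanifold), so Theorem \ref{Thm:EmbFlow2} applies directly and yields that $\phi^t$ is embedded for all sufficiently large $t$.

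The main obstacle is the second step: establishing global injectivity of $G$ from purely asymptotic information. This is precisely where the specialized asymptotic rigidity of Epstein is indispensable, since no soft local argument (local injectivity from weak horospherical convexity alone) can rule out nontrivial self-overlaps of the Gauss map over the punctured sphere $\mathbb{S}^n \setminus \{x\}$. Once that step is in place, the remaining conclusions are immediate invocations of Theorems \ref{Thm:GCT2} and \ref{Thm:EmbFlow2}; it is worth checking that the dimensional restriction $n \geq 2$ is consistent throughout, which it is, because Lemma \ref{Lem:Gradient2} and the results feeding \ref{Thm:EmbFlow2} now hold in both $n=2$ and $n \geq 3$ in a unified manner.
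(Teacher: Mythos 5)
Your proposal is correct and follows essentially the same route as the paper: run the geodesic normal flow (defined using only the local injectivity of $G$ coming from weak horospherical convexity), use \eqref{Eq:FlowPC} with the uniform bound $\kappa_i\geq\kappa_0>-1$ to make $\phi^t$ uniformly convex for large $t$, invoke the higher-dimensional extensions of Corollary 4.4 and Proposition 1.3 of \cite{epstein1987asymptotic} to get injectivity of the Gauss map, transfer this back to $\phi$ by the flow-invariance of $G$, and conclude via Theorems \ref{Thm:GCT2} and \ref{Thm:EmbFlow2} (a single point being an allowable asymptotic boundary). This matches the paper's argument in both structure and the key external input.
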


\begin{proof}
As we mentioned previously in this section, we can still define the geodesic normal flow with just local injectivity of the hyperbolic Gauss map. Let $\phi^t$ denote the geodesic normal flow of $\phi=\phi^0$ for $t\geq0$. From the evolution of the principal curvatures along the normal flow given by \eqref{Eq:FlowPC}, we know that $\phi^t$ is uniformly convex when $t$ is sufficiently large. Then the injectivity of the hyperbolic Gauss map of $\phi^t$ follows from extensions of Corollary 4.4 and Proposition 1.3 in \cite{epstein1987asymptotic} to higher dimensions. Furthermore, we know $\phi$ is admissible due to the invariance of the hyperbolic Gauss map under the geodesic normal flow and therefore the result follows from the Global Correspondence Theorem \ref{Thm:GCT2} and the Embeddededness Theorem \ref{Thm:EmbFlow2}.
\end{proof}

Finally, we apply Theorem \ref{Thm:CorrSinglePtBd2} and the work on elliptic Weingarten problems in \cite{bonini2015hypersurfaces} to establish a new, stronger Bernstein type theorem that does not a priori assume embeddedness or any additional curvature conditions.

\begin{thm}\label{Thm:Bernstein2}
For $n\geq2$, suppose that $\phi: M^n\to\mathbb{H}^{n+1}$ is an immersed, complete, uniformly weakly horospherically convex hypersurface with constant mean curvature. Then it is a horosphere if its boundary at infinity is a single point in $\mathbb{S}^n$.
\end{thm}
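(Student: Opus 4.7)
The plan is to combine our improved correspondence, Theorem \ref{Thm:CorrSinglePtBd2}, with the elliptic rigidity machinery already developed in \cite{bonini2015hypersurfaces}. The hypothesis that $\phi$ is immersed, complete, uniformly weakly horospherically convex with single-point asymptotic boundary $\partial_\infty\phi(M)=\{x\}$ is exactly what Theorem \ref{Thm:CorrSinglePtBd2} requires. I would apply it directly to conclude that $\phi$ is admissible: the hyperbolic Gauss map $G:M^n\to\mathbb{S}^n$ is injective, and $\phi$ induces a complete conformal metric $\hat{g}=e^{2\rho}g_{\mathbb{S}^n}$ on $\Omega:=G(M)=\mathbb{S}^n\setminus\{x\}$ with bounded $2$-tensor $P$.

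Next, the local correspondence relation $\lambda_i=\frac12-\frac{1}{1+\kappa_i}$ from Theorem \ref{Thm:LCT} translates the constant mean curvature condition $\frac1n\sum_i\kappa_i=H_0$ into the symmetric equation
\begin{equation*}
\frac1n\sum_{i=1}^{n}\frac{1+2\lambda_i}{1-2\lambda_i}=H_0
\end{equation*}
for the eigenvalues $\lambda_1,\dots,\lambda_n$ of $P_{\hat g}$. The uniform weak horospherical convexity $\kappa_i\geq\kappa_0>-1$ is precisely what keeps $\lambda_i<\frac12$ uniformly, so this is a bona fide conformally invariant elliptic Weingarten-type equation of the class treated in \cite{bonini2015hypersurfaces}.

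At this stage I would invoke the Liouville-type rigidity theorem from \cite{bonini2015hypersurfaces}: a complete conformal metric on $\mathbb{S}^n\setminus\{x\}$ whose $P$-tensor satisfies such a conformally invariant elliptic problem must arise from a horosphere with point at infinity $x$, equivalently $P\equiv 0$ and hence $\lambda_i\equiv 0$ on $\Omega$. For $n\geq 3$, this is proven in \cite{bonini2015hypersurfaces} via a Caffarelli-Gidas-Spruck style moving plane argument, while for $n=2$ the needed rigidity is precisely our Corollary \ref{Cor:Liouville}, which itself is an application of the two-dimensional Global Correspondence Theorem \ref{Thm:GCT2} just established. Pulling back through the correspondence, $\lambda_i\equiv 0$ forces $\kappa_i\equiv 1$ on $M$; a complete immersed hypersurface in $\mathbb{H}^{n+1}$ with all principal curvatures identically equal to $1$ is totally umbilic, and by the standard classification of totally umbilic hypersurfaces in hyperbolic space it must be a horosphere.

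The hardest step is not in the new Bernstein statement itself but in the inputs it consumes. The removal of a priori injectivity or embeddedness hypotheses on $\phi$ has been absorbed into Theorem \ref{Thm:CorrSinglePtBd2}, whose nontrivial content is the use of the geodesic normal flow together with Epstein's asymptotic results to recover global injectivity of $G$ from just local injectivity plus a single boundary point at infinity. The only other genuinely new PDE ingredient — essential for covering the dimension $n=2$ — is the two-dimensional Liouville rigidity in Corollary \ref{Cor:Liouville}, which was previously unavailable because the global correspondence on $\mathbb{S}^2$ itself was missing. With both pieces in place, the Bernstein theorem follows as an immediate consequence.
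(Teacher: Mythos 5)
Your overall strategy (feed the single-point-boundary correspondence into the elliptic machinery of \cite{bonini2015hypersurfaces}) is the right one, and the translation of constant mean curvature into the equation $\frac1n\sum_i\frac{1+2\lambda_i}{1-2\lambda_i}=H_0$ for the eigenvalues of $P$ is correct. But there is a genuine gap at exactly the case the theorem is new for, namely $n=2$: your rigidity input there is Corollary \ref{Cor:Liouville}, and in this paper that corollary is not an independent PDE theorem --- it is presented as \emph{equivalent to the surface Bernstein theorem under the correspondence} and as a consequence of the two-dimensional correspondence together with the elliptic work of \cite{bonini2015hypersurfaces}. The only higher-dimensional analogue with an independent proof is the one in \cite{caffarelli1989asymptotic}, which the paper explicitly restricts to $n\geq3$; there is no two-dimensional Caffarelli--Gidas--Spruck-type result you can quote. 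If you try to prove Corollary \ref{Cor:Liouville} first, you are forced back through the correspondence to the hypersurface side: realize the metric by an admissible hypersurface, flow it until it is properly embedded, and apply the generalized Bernstein/Weingarten rigidity (Theorem 4.4 of \cite{bonini2015hypersurfaces}), whose moving-plane/Alexandrov arguments require proper embeddedness. So your argument for $n=2$ is either circular or, once unwound, collapses into the paper's own proof with an extra round trip through the conformal side.

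This also explains a structural symptom of your write-up: you never use the embeddedness conclusion of Theorem \ref{Thm:CorrSinglePtBd2}, even though producing an embedded representative is the whole point of that theorem. The paper's proof is shorter and avoids the issue entirely: by Theorem \ref{Thm:CorrSinglePtBd2} the flowed hypersurface $\phi^t$ is, for $t$ large, properly embedded, uniformly weakly horospherically convex, with single-point boundary at infinity; by \eqref{Eq:FlowPC} the constant mean curvature of $\phi$ becomes an elliptic Weingarten equation for $\phi^t$; Theorem 4.4 of \cite{bonini2015hypersurfaces} (which is stated on the hypersurface side and needs the embeddedness) then forces $\phi^t$, and hence $\phi$, to be a horosphere. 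To repair your proposal, replace the appeal to Corollary \ref{Cor:Liouville} in dimension two by this hypersurface-side step (or prove the two-dimensional Liouville statement by some independent argument, which neither this paper nor the cited literature supplies); for $n\geq3$ your metric-side route is fine, granting that \cite{bonini2015hypersurfaces} or \cite{caffarelli1989asymptotic} covers the fully nonlinear equation above and not just the scalar curvature case.
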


\begin{proof}
By Theorem \ref{Thm:CorrSinglePtBd2}, for $t$ sufficiently large the hypersurface $\phi^t$ defined by \eqref{Eq:NormalFlow} is a properly embedded, uniformly weakly horospherically convex hypersurface with single point boundary at infinity. Moreover, from \eqref{Eq:FlowPC} it follows that $\phi^t$ satisfies an elliptic Weingarten equation so the theorem follows from the generalized Bernstein Theorem 4.4 in \cite{bonini2015hypersurfaces}.
\end{proof}

% ----------------------------------------------------------------

\nocite{*}

% ----------------------------------------------------------------

\bibliographystyle{amsplain}
\bibliography{reference}

\end{document}